\newtheorem{lemma}{Lemma}
\newtheorem{prop}{Proposition}
\newtheorem{theorem}{Theorem}
\newtheorem{coro}{Corollary}
\theoremstyle{remark}
\newtheorem{rema}{{\rm\bf Remark}}
\newcommand{\Hom}{{\rm Hom}_{\Lambda}}
\newcommand{\Homk}{{{\rm Hom}_k}}
\newcommand{\charr}{{\rm char\,}}
\DeclareMathOperator{\Ker}{Ker}
\DeclareMathOperator{\HH}{HH}
\renewcommand{\Im}{{\rm Im\,}}
\renewcommand{\Bar}{{\rm Bar}}
\newcommand{\la}{\langle}
\newcommand{\ra}{\rangle}
\renewcommand{\le}{\leqslant}
\renewcommand{\ge}{\geqslant}
\newcommand{\Nod}{\text{\rm CMD}}
\newcommand{\ee}{\varepsilon}
\newcommand{\si}{\sigma}
\newcommand{\ph}{\varphi}
\numberwithin{equation}{section}
\begin{document}
\title{BV-differential on Hochschild cohomology of Frobenius algebras.}
\author{Y. V. Volkov\footnote{The author was supported by RFBR (13-01-00902 A and 14-01-31084 mol\_a).}}
\date{}
\maketitle
\begin{abstract}
For a finite-dimensional Frobenius $k$-algebra $R$ with the Nakayama automorphism $\nu$ we define an algebra $\HH^*(R)^{\nu\uparrow}$.
If the order of $\nu$ is not divisible by the characteristic of $k$, this algebra is isomorphic to the Hochschild cohomology algebra of $R$.. We prove that this algebra is a BV-algebra.  We use this fact to calculate the Gerstenhaber algebra structure and BV-structure on the Hochschild cohomology algebras of a family of self-injective algebras of tree type $D_n$.
\end{abstract}

\section{Introduction}
Hochschild cohomology is a subtle invariant of an associative algebra which carries a lot of information about its structure.
The cohomology theory of associative algebras was introduced by Hochschild. It was later shown in \cite{Gerstenhaber}, that the Hochschild cohomology algebra is a Gerstenhaber algebra. Sometimes we can define a BV-differential on the Hochschild cohomology algebra in such way that the structure of the Gerstenhaber algebra can be defined using this differential. In \cite{Tradler} Tradler shows that such differential can be defined for symmetric algebras. It is proved in \cite{EuS} that a BV-differential can be constructed if the given algebra is a Frobenius algebra with finite stable Calabi-Yau dimension and periodic Hochschild cohomology.
Moreover it is shown in \cite{Ginz} that a BV-differential exists in the case of Calabi-Yau algebra.

Let $R$ be a Frobenius algebra over an algebraically closed field. Let $\nu$ be its Nakayama automorphism. In this paper we define an algebra  $\HH^*(R)^{\nu\uparrow}$. It turns out that $\HH^*(R)^{\nu\uparrow}\simeq\HH^*(R)$ in many cases. The Lie bracket on the algebra $\HH^*(R)$ induces a bracket on $\HH^*(R)^{\nu\uparrow}$. We slightly modify the Tradler's proof for symmetric algebras to construct the BV-differential on the algebra $\HH^*(R)^{\nu\uparrow}$.

In section 5 we apply the obtained results to describe the Gerstenhaber algebra structure and the BV-differential for a family of self-injective algebras of tree type $D_n$. The Hochschild cohomology ring for these algebras was described in terms of generators and relations in \cite{Volk}. Note that the calculation of these structures is a difficult task. There are only a few examples of such calculations. The structure of Gerstenhaber algebra was described for serial self-injective algebras in \cite{XuZhao}. The calculus structure which includes the Gerstenhaber algebra structure was calculated for preprojective algebras of type $A_n$, $D_n$, $E_n$ and $L_n$ in \cite{Eu1} and \cite{Eu2}. Furtermore the Gerstenhaber algebra structure and the BV-differential for the group algebra of the quaternion group of order 8 over a field of characteristic 2 were calculated in \cite{IIVZ}.

\section{Basic definitions and constructions}
Throughout the paper we suppose that $k$ is an algebraically closed field, $R$ is a finite-dimensional $k$-algebra, $\Lambda=R\otimes R^{\rm op}$ is the enveloping algebra of $R$ (we write $\otimes$ instead of $\otimes_k$). The bar-resolution
$$\Bar_*(R):(R\stackrel{\mu}\leftarrow) R^{\otimes 2}\stackrel{d_0}\leftarrow R^{\otimes 3}\leftarrow\cdots\leftarrow R^{\otimes (n+2)}\stackrel{d_n}\leftarrow R^{\otimes (n+3)}\leftarrow\cdots$$
of the algebra $R$ is defined in the following way: $\Bar_n(R)=R^{\otimes(n+2)}$ ($n\ge 0$), $\mu:R\otimes R\rightarrow R$ is the multiplication of $R$ and $d_n$ ($n\ge 0$) is defined by the formula
 $$
 d_n(a_0\otimes\cdots\otimes a_{n+2})=\sum\limits_{i=0}^{n+1}(-1)^ia_0\otimes\cdots\otimes a_{i-1}\otimes a_ia_{i+1}\otimes a_{i+2}\otimes\cdots\otimes a_{n+2},
 $$
where $a_i\in R$ ($0\le i\le n+2$). The homology of the complex $C^*(R)=\Hom(\Bar_*(R),R)$ is called Hochschild cohomology of the algebra $R$.
Note that $C^0(R)\simeq R$ and $C^n(R)=\Hom(R^{\otimes(n+2)},R)\simeq\Homk(R^{\otimes n},R)$. Let introduce notation
  $$
  \begin{aligned}
 &\delta_n^i(f)(a_1\otimes\dots\otimes a_{n+1})\\
 :=&\begin{cases}
 a_1f(a_2\otimes\dots\otimes a_{n+1}),&\mbox{if $i=0$},\\
  (-1)^if(a_1\otimes \cdots \otimes a_ia_{i+1} \otimes \cdots\otimes
a_{n+1}),&\mbox{if $1\le i\le n$},\\
(-1)^{n+1}f(a_1\otimes \cdots \otimes a_n) a_{n+1},&\mbox{if $i=n+1$}.
 \end{cases}
 \end{aligned}
 $$
for $f\in C^n(R)$. Then $C^*(R)$ has the form
 $$
 R\stackrel{\delta_0}\rightarrow\Homk(R,R)\rightarrow\cdots\rightarrow \Homk(R^{\otimes n},R)\stackrel{\delta_n}\rightarrow\Homk(R^{\otimes (n+1)},R)\rightarrow\cdots,
 $$
where $\delta_n=\sum\limits_{i=0}^{n+1}\delta_n^i$.

The cup product $f\smile g \in
C^{n+m}(R)=\mathrm{Hom}_k(R^{\otimes (n+m)}, R)$ for $f\in
C^n(R)$  and $g\in C^m(R)$ is given by
$$
(f\smile g)(a_1\otimes \cdots\otimes
a_{n+m}):=f(a_1\otimes\cdots\otimes a_n)\cdot
g(a_{n+1}\otimes\cdots\otimes a_{n+m}).
$$
This cup product induces a well-defined product on Hochschild
cohomology
$$
\smile \colon \HH^n(R) \times \HH^m(R) \longrightarrow \HH^{n+m}(R)
$$
which turns the graded $k$-vector space $\HH^*(R)=\bigoplus_{n\geq
0}\HH^n(R)$ into a graded commutative algebra (\cite[Corollary
1]{Gerstenhaber}).

Let now define the Lie bracket. Let $f \in C^n(R)$,
$g \in C^m(R)$. If $n, m\geq 1$, then for $1\leq i\leq n$, we set
$$
\begin{aligned}
&(f\circ_i g)(a_1\otimes \cdots\otimes a_{n+m-1})\\
:=&f(a_1\otimes \cdots \otimes a_{i-1}\otimes
g(a_i\otimes \cdots \otimes a_{i+m-1})\otimes a_{i+m}\otimes
\cdots \otimes a_{n+m-1});
\end{aligned}
$$
if $ n\geq 1$ and $m=0$, then $g\in R$ and for $1\leq i\leq
n$, we set
$$
(f\circ_i g)(a_1\otimes \cdots\otimes
a_{n-1}):=f(a_1\otimes \cdots \otimes a_{i-1}\otimes g
\otimes a_{i }\otimes \cdots \otimes a_{n-1}).
$$
Now define
$$
f\circ g
:=\sum_{i=1}^n(-1)^{(m-1)(i-1)}f\circ_i g
$$
and
$$
[f,\, g] :=f\circ g-(-1)^{(n-1)(m-1)}g\circ f.
$$
Note that $[f,\, g]\in C^{n+m-1}(R)$. Then $[\
\,,\,\ ]$ induces a well-defined Lie bracket on Hochschild
cohomology
$$
[\ \,,\,\ ]: \HH^n(R) \times \HH^m(R) \longrightarrow
\HH^{n+m-1}(R)
$$
such that $(\HH^*(R),\, \smile,\, [\ \,,\,\ ])$ is a Gerstenhaber
algebra (\cite{Gerstenhaber}).

A Batalin--Vilkovisky  algebra (BV-algebra for short) is a
Gerstenhaber algebra $(A^\bullet,\, \smile,\, [\ \,,\,\ ])$
together with an operator $\Delta\colon A^\bullet \rightarrow
A^{\bullet-1}$ of degree $-1$ such that $\Delta\circ  \Delta=0$
and
\begin{equation}\label{GerBVeq}
[a,\, b]=-(-1)^{(|a|-1)|b|}(\Delta(a\smile b)- \Delta(a)\smile
b-(-1)^{|a|}a\smile \Delta(b))
\end{equation}
for   homogeneous  elements $a, b\in A^\bullet$. The following Theorem is proved in \cite{Tradler}.

\begin{theorem} \cite[Theorem 1]{Tradler} \label{Tradler}
Let $R$ be a symmetric algebra, i.e. an algebra with a nondegenerate associative symmetric bilinear form $\langle\ \,,\,\ \rangle:R\times R\rightarrow k$.
 For $f\in
C^n(R)=\mathrm{Hom}_k(R^{\otimes n}, R)$ define $\Delta(f) \in
C^{n-1}(R)=\mathrm{Hom}_k(R^{\otimes(n-1)}, R)$ by the formula
$$
\begin{aligned}
&\langle \Delta(f)(a_1\otimes\cdots\otimes a_{n-1}),\;a_n\rangle\\
=&\sum_{i=1}^n  (-1)^{i(n-1)} \langle f(a_i\otimes
\cdots\otimes a_n\otimes a_1\otimes \cdots\otimes
a_{i-1}), \; 1\rangle,
\end{aligned}
$$
where $a_i\in R$ ($1\le i\le n$). The map $\Delta$ induces the differential $\Delta:\HH^n(R) \rightarrow \HH^{n-1}(R)$. Then $(\HH^*(R),\, \smile,\, [\ \,,\,\ ],\, \Delta)$ is a BV-algebra.
\end{theorem}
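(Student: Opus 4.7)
The plan is to verify the three BV-algebra axioms at the cochain level: (a) that $\Delta$ passes to cohomology, (b) that $\Delta^2=0$, and (c) the BV compatibility equation \eqref{GerBVeq}. First I would note that the nondegeneracy of $\langle\,,\,\rangle$ guarantees that $\Delta(f)$ is well defined as an element of $C^{n-1}(R)$, and that the assignment $f\mapsto\tilde f$, with $\tilde f(a_1\otimes\cdots\otimes a_{n+1})=\langle f(a_1\otimes\cdots\otimes a_n),a_{n+1}\rangle$, identifies $C^n(R)$ with $\Homk(R^{\otimes(n+1)},k)$. In this dual picture the formula for $\Delta$ becomes an honest cyclic rotation operator, and the associativity $\langle ab,c\rangle=\langle a,bc\rangle$ together with the symmetry $\langle a,b\rangle=\langle b,a\rangle$ provide the standard toolkit for moving products and arguments across the pairing.

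Next I would verify by direct expansion that $\delta\Delta+\Delta\delta=0$ on $C^*(R)$. The wrap-around summand $\delta_n^{n+1}(f)(a_1\otimes\cdots\otimes a_{n+1})=(-1)^{n+1}f(a_1\otimes\cdots\otimes a_n)a_{n+1}$ is precisely the term that the symmetry of $\langle\,,\,\rangle$ renders compatible with the cyclic sum defining $\Delta$, so it pairs correctly against the contributions arising from the other $\delta_n^i$. The vanishing $\Delta^2=0$ can then be shown at the cochain level: expanding $\Delta^2(f)$ produces a double cyclic sum in which the two slots equal to $1$, together with the symmetry and associativity of $\langle\,,\,\rangle$, force the terms to cancel pairwise.

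The heart of the proof is the BV-identity (c). I would fix $f\in C^n(R)$ and $g\in C^m(R)$, pair both sides of \eqref{GerBVeq} with an additional argument $a_{n+m}$, and expand $\Delta(f\smile g)$ using the cyclic formula as a sum $\sum_{i=1}^{n+m}(-1)^{i(n+m-1)}\langle f(\cdots)g(\cdots),1\rangle$. Splitting this sum into the ranges $1\le i\le n$ (cut inside the $f$-block) and $n+1\le i\le n+m$ (cut inside the $g$-block), and using $\langle ab,1\rangle=\langle a,b\rangle=\langle b,a\rangle$ to relocate the pairing, the two extreme families of terms should recombine into $\Delta(f)\smile g$ and $(-1)^{|f|}f\smile\Delta(g)$. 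The remaining terms, in which the rotated element $g(\cdots)$ (resp. $f(\cdots)$) is embedded as an inner argument after using symmetry, are exactly the insertions defining $f\circ_i g$ and $g\circ_j f$, and thus after sign bookkeeping they assemble into $-(-1)^{(|f|-1)|g|}[f,g]$.

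The main obstacle is this third step: the sign calculus in the split cyclic sum is delicate, and several families of terms either cancel among themselves or combine only after a simultaneous use of the symmetry and associativity of $\langle\,,\,\rangle$. A further subtlety is that the matching with the bracket will hold only modulo the image of $\delta$, so one has to exhibit an explicit cochain whose $\delta$-coboundary accounts for the leftover terms; this is the point at which the hypothesis that $R$ is symmetric (and not just Frobenius) is used in its strongest form, since the cyclic rotation is only genuinely compatible with the pairing when $\langle\,,\,\rangle$ is symmetric. Once the identity is secured at the cochain level modulo $\delta$, descending to $\HH^*(R)$ and concluding that $(\HH^*(R),\smile,[\,,\,],\Delta)$ is a BV-algebra is immediate.
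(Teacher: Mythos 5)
Your step (a) is correct and is exactly Lemma~\ref{chainty} of the paper specialized to $\nu={\rm Id}$, but step (b) contains a genuine error: $\Delta^2=0$ does \emph{not} hold on all of $C^*(R)$, and the terms of $\Delta^2(f)$ do not ``cancel pairwise''. Expanding the definition twice, every term of $\langle\Delta^2(f)(a_1\otimes\cdots\otimes a_{n-2}),a_{n-1}\rangle$ has the form $\pm\langle f(\cdots\otimes 1\otimes\cdots),1\rangle$, with the unit inserted into one slot of $f$; if $f$ is normalized (vanishes whenever some argument equals $1$) each such term is zero individually, but for a general cochain the sum need not vanish. Concretely, for $R=kC_2=k\langle g\rangle$ with $\epsilon$ the coefficient of the identity, $n=2$ and $f(a\otimes b)=\lambda(a)b$ where $\lambda(1)=0$, $\lambda(g)=1$, one computes $\langle\Delta^2(f),a\rangle=-\langle f(a\otimes 1),1\rangle+\langle f(1\otimes a),1\rangle=-\lambda(a)\epsilon(1)\neq 0$. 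The missing ingredient is the normalization step: one must first show that every class in $\HH^n(R)$ is represented by a cocycle vanishing on tensors containing $1$, which is done with the degeneracy maps $s_n^i$ and an induction; this is precisely the argument the paper carries out in the proof of Theorem~\ref{Tradler_new} (where it must moreover be made compatible with $\nu$-invariance), and only after that reduction does $\Delta^2=0$ make sense on cohomology.

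For step (c), what you offer is a plan rather than a proof. You correctly predict that the extreme terms of the split cyclic sum should produce $\Delta(f)\smile g$ and $(-1)^{|f|}f\smile\Delta(g)$ and that the inner-cut terms should reassemble into the bracket, but the identity \eqref{GerBVeq} holds only modulo coboundaries, and you acknowledge--without producing it--the explicit cochain whose $\delta$-image absorbs the leftover terms, together with the attendant sign analysis. That homotopy is the substantive content of Tradler's theorem; note that the paper itself does not reprove it: it quotes the statement from \cite{Tradler}, and even in its generalization (Theorem~\ref{Tradler_new}) it only supplies the normalization argument and the decomposition $\Delta(f\smile g)=\Delta'(f\otimes g)+(-1)^{nm}\Delta'(g\otimes f)$, deferring the remaining computation to Tradler. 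As it stands, your argument therefore establishes neither $\Delta^2=0$ on $\HH^*(R)$ nor the BV identity.
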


If $\si:R\rightarrow R$ is an automorphism of the algebra $R$, then we can define a map $\phi_{\si}:C^n(R)\rightarrow C^n(R)$ by the formula
$$
\big(\phi_{\si}(f)\big)(a_1\otimes\dots\otimes a_n)=\si^{-1}(f(\si(a_1)\otimes\dots\otimes\si(a_n))),
$$
where $f\in C^n(R)=\mathrm{Hom}_k(R^{\otimes n}, R)$, $a_i\in R$ ($1\le i\le n$). From here on we write $f^{\si}$ instead of $\phi_{\si}(f)$. It is easy to show that $(\delta_nf)^{\si}=\delta_nf^{\si}$ for $f\in C^n(R)$. Consequently, the map $\phi$ induces a map on Hochschild cohomology
$(\,\,)^\si:\HH^n(R) \rightarrow \HH^{n}(R)$. For a map $\si:X\rightarrow X$ we denote by $X^{\si}$ the set $\{x\in X\mid \si(x)=x\}$. Then $\delta_n:C^n(R)\rightarrow C^{n+1}(R)$ induces the map $\delta_n^{\si}:C^n(R)^{\si}\rightarrow C^{n+1}(R)^{\si}$. We denote by $\HH^n(R)^{\si\uparrow}$ the homology of the complex $(C^n(R)^{\si},\delta_n^{\si})$ and define
$$
\HH^*(R)^{\si\uparrow}:=\bigoplus\limits_{n\ge 0}\HH^n(R)^{\si\uparrow}=\bigoplus\limits_{n\ge 0}\Ker(\delta_n^{\si})/\Im(\delta_{n-1}^{\si}).
$$

It is proved in \cite{Volk_new} that $\smile:C^n(R) \times C^m(R) \longrightarrow C^{n+m}(R)$ defined above determines an algebra structure on $\HH^*(R)^{\si\uparrow}$ and the inclusion of $C^n(R)^{\si}$ into $C^n(R)$ induces an algebra homomorphism
$$
\Theta_R^{\si}:\HH^*(R)^{\si\uparrow}\rightarrow \HH^*(R)^{\si}.
$$
Moreover, $\Theta_R^{\si}$ is bijective if ${\rm ord}(\si)<\infty$ and $\charr k{\not|}{\rm ord}(\si)$. The fact that $[\,,\,]:C^n(R) \times C^m(R) \longrightarrow C^{n+m-1}(R)$ induces a Gerstenhaber algebra structure on $\HH^*(R)^{\si\uparrow}$ can be proved analogously to the fact that $[\,,\,]$ induces a Gerstenhaber algebra structure on $\HH^*(R)$ (cf. \cite{Gerstenhaber}). It is easy to see that $\Theta_R^{\si}$ is a homomorphism of Gerstenhaber algebras.

If $d:M\rightarrow  N$ is a morphism of $R$-bimodules and $\si_1$, $\si_2$ are automorphisms of the algebra $R$, then we denote by $d_{(\si_1,\si_2)}:{}_{\si_1}M_{\si_2}\rightarrow{}_{\si_1}N_{\si_2}$ the morphism of $R$-bimodules, which is determined by the formula $d_{(\si_1,\si_2)}(m)=d(m)$ for $m\in M$ (we denote by ${}_{\si_1}M_{\si_2}$ the bimodule which is equal to $M$ as $k$-linear space with multiplication $*$ defined by the equality $a*m*b=\si_1(a)m\si_2(b)$ for $m\in M$, $a,b\in R$).

Algebra $R$ is called a Frobenius algebra if there is a linear map $\epsilon:R\rightarrow k$ such that the bilinear form
$\langle a,b\rangle=\epsilon(ab)$
is nondegenerated. The Nakayama automorphism $\nu:R\rightarrow R$ is the automorphism which satisfies the equation $\langle a,b\rangle=\langle b,\nu(a)\rangle$ for all $a,b\in R$. From here on we assume that $R$ is a Frobenius algebra, $\langle\,,\,\rangle$ is the corresponding bilinear form and $\nu$ is the Nakayama automorphism defined by it.

 \section{The generalization of Tradler's Theorem}
 Let $f\in C^n(R)$, $n\ge 1$. Define $\Delta_if\in C^{n-1}(R)$ by the equation
 $$
 \begin{aligned}
 &\la\Delta_if(a_1\otimes\dots\otimes a_{n-1}),a_n\ra\\
 =&\la f(a_i\otimes\dots\otimes a_{n-1}\otimes a_n\otimes \nu a_1\otimes\dots\otimes \nu a_{i-1}),1\ra,
 \end{aligned}
 $$
where $a_i\in R$ ($1\le i\le n$). Further define
\begin{equation}\label{Delta}
 \Delta:=\sum\limits_{i=1}^n(-1)^{i(n-1)}\Delta_i:C^n(R)\rightarrow C^{n-1}(R).
\end{equation}

\begin{lemma}\label{chainty}
$\delta_{n-1}(\Delta f)+\Delta\delta_n(f)=f^{\nu}-f\hspace{0.2cm}\forall f \in C^n(R)$.
\end{lemma}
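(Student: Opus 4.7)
Since the Frobenius form $\langle\,,\,\rangle$ is non-degenerate, it suffices to verify the claimed identity after pairing both sides with an arbitrary $a_{n+1}\in R$. The plan is to expand $\langle\delta_{n-1}(\Delta f)(a_1\otimes\cdots\otimes a_n),a_{n+1}\rangle$ and $\langle\Delta\delta_n(f)(a_1\otimes\cdots\otimes a_n),a_{n+1}\rangle$ into sums, then rewrite each summand in the canonical shape $\langle f(\cdots),1\rangle$ using the two identities
\[
\langle ab,c\rangle=\langle a,bc\rangle,\qquad \langle a,b\rangle=\langle b,\nu(a)\rangle,
\]
and finally match terms so that nearly all of them cancel in pairs. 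The two identities together yield the cyclic rule $\langle x_1\cdots x_m,1\rangle=\langle x_i\cdots x_m\nu(x_1)\cdots\nu(x_{i-1}),1\rangle$, which is exactly the rotation baked into the definition of $\Delta_i$.

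\textbf{Expansion.} Writing $\delta_{n-1}(\Delta f)=\sum_{i=1}^n(-1)^{i(n-1)}\sum_{j=0}^{n}\delta_{n-1}^j(\Delta_i f)$ gives $n(n+1)$ summands. For $1\le j\le n-1$ the interior contraction $a_ja_{j+1}$ ends up in a fixed slot of $f$ after the cyclic shift encoded in $\Delta_i$. For $j=0$ the leading factor $a_1$ multiplies $\Delta_i f(a_2\otimes\cdots\otimes a_n)$ from the left; using associativity and then Nakayama, $a_1$ (or $\nu(a_1)$) is absorbed into an inner slot of $f$, reducing the term to the canonical form. The $j=n$ case is handled symmetrically by absorbing $a_n$ from the right. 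The same procedure is applied to $\Delta\delta_n(f)=\sum_{i=1}^{n+1}(-1)^{in}\sum_{j=0}^{n+1}\Delta_i(\delta_n^j f)$, producing $(n+1)(n+2)$ summands in canonical form.

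\textbf{Cancellation.} Every summand of $\delta_{n-1}(\Delta f)$, once brought into canonical form, coincides up to sign with a summand of $\Delta\delta_n(f)$: the total cyclic shift is determined by the position at which the contraction $a_ja_{j+1}$ falls inside $f$, so for each such position there are precisely two contributing summands, one from each side, and they must carry opposite signs. Here one uses that the sign of $\delta_{n-1}^j\Delta_i f$ is $(-1)^{i(n-1)+j}$, the sign of $\Delta_{i'}\delta_n^{j'}f$ is $(-1)^{i'n+j'}$, and an elementary index calculation shows these signs differ by an odd integer in the pairs that are to be cancelled. The only summands left without a partner are the two extremes in which no interior product appears: one produces $\langle f^{\nu}(a_1\otimes\cdots\otimes a_n),a_{n+1}\rangle$ (with the $\nu$-twist appearing because the argument $a_1$ has been cyclically moved past position one), and the other produces $-\langle f(a_1\otimes\cdots\otimes a_n),a_{n+1}\rangle$.

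\textbf{Main obstacle.} The argument is fundamentally a bookkeeping exercise, and the principal difficulty is ensuring simultaneously that the signs line up and that the $\nu$-twists generated by repeated applications of the Nakayama identity end up on exactly those arguments prescribed by the definition of $\Delta_i$. This is precisely what singles out our formula for $\Delta$ as the correct Frobenius generalization of Tradler's: when $\nu=\mathrm{id}$ all $\nu$-twists disappear, the right-hand side becomes $f-f=0$, and we recover Tradler's statement that $\Delta$ is a chain map on $C^\ast(R)$ for symmetric $R$.
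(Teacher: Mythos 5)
Your overall strategy coincides with the paper's: pair everything against an arbitrary $a_{n+1}$, use $\la ab,c\ra=\la a,bc\ra$ and $\la a,b\ra=\la b,\nu(a)\ra$ to bring each summand to the shape $\la\,\cdot\,,1\ra$ evaluated on the rotated words $A_i=a_i\otimes\cdots\otimes a_{n+1}\otimes\nu a_1\otimes\cdots\otimes\nu a_{i-1}$, and then cancel. However, your Cancellation step contains a genuine gap. A count of summands already shows the pairing you describe cannot be the whole story: $\delta_{n-1}(\Delta f)$ contributes $n(n+1)$ terms, while $\Delta\delta_n(f)$ contributes $(n+1)(n+2)$, so a matching in which every cancelling pair has ``one member from each side'' necessarily leaves $2(n+1)$ terms of $\Delta\delta_n(f)$ unaccounted for, not two. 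These leftovers are exactly the contributions of the outer faces $\delta_n^0$ and $\delta_n^{n+1}$ inside $\Delta\delta_n(f)$: in canonical form they read $\la a_i\,f(\cdots),1\ra$ or $\pm\la f(\cdots)\,b,1\ra$ with the multiplication standing \emph{outside} $f$, so they can never match a term of $\delta_{n-1}(\Delta f)$, all of whose canonical forms carry the contraction inside an argument slot of $f$ (your own Expansion paragraph shows this for $j=0$ and $j=n$). Hence there are $2(n+1)$ summands ``in which no interior product appears'', not only the two extremes you keep.

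The missing idea is that these $2(n+1)$ terms must be cancelled against \emph{each other}, by one more application of the Frobenius/Nakayama relation: since $\la a_i f(w),1\ra=\la f(w)\,\nu a_i,1\ra$, one has $\la\delta_n^0(f)(A_i)+(-1)^n\delta_n^{n+1}(f)(A_{i+1}),1\ra=0$ for $1\le i\le n$, and this telescoping leaves precisely the two boundary terms $(-1)^n\delta_n^{n+1}(f)(A_1)$ and $\delta_n^0(f)(A_{n+1})$, which give $-\la f(a_1\otimes\cdots\otimes a_n)a_{n+1},1\ra$ and $\la a_{n+1}f(\nu a_1\otimes\cdots\otimes\nu a_n),1\ra=\la f^{\nu}(a_1\otimes\cdots\otimes a_n),a_{n+1}\ra$. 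This internal telescoping is exactly the step the paper isolates, and it is where the $\nu$-twist in $f^{\nu}$ is actually produced; without it, and with the sign bookkeeping only asserted (``an elementary index calculation''), your two surviving extremes are not justified. Once you add this step and verify the signs $(-1)^{i(n-1)+j}$ versus $(-1)^{i'n+j'}$ explicitly, your argument becomes the paper's proof.
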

\begin{proof} Let $f\in C^n(R)$, $a_i\in R$ ($1\le i\le n+1$). Set $$A_i:=a_i\otimes\dots\otimes a_{n+1}\otimes\nu a_1\otimes\dots\otimes\nu a_{i-1}\in R^{\otimes (n+1)}.$$
Direct calculations show that
$$
\begin{aligned}
&\la (-1)^{i(n-1)}\delta_{n-1}^j(\Delta_i f)(a_1\otimes\dots\otimes a_n), a_{n+1}\ra\\
=&\begin{cases}
\la(-1)^{(i+1)n+1}\delta_n^{n-i+j+1}(f)(A_{i+1}),1\ra,&\mbox{if $0\le j\le i-1$,}\\
\la(-1)^{in+1}\delta_n^{j-i+1}(f)(A_i),1\ra,&\mbox{if $i\le j\le n$.}
\end{cases}
\end{aligned}$$
Adding these equalities for $0\le j\le n$ and $1\le i\le n$, we obtain
$$
\begin{aligned}
&\la \delta_{n-1}(\Delta f)(a_1\otimes\dots\otimes a_n), a_{n+1}\ra+\la \Delta \delta_n(f)(a_1\otimes\dots\otimes a_n), a_{n+1}\ra\\
=&\left\la\sum\limits_{i=1}^{n+1}(-1)^{in+1}\sum\limits_{j=1}^n\delta_n^j(f)(A_i),1\right\ra+
\left\la\sum\limits_{i=1}^{n+1}(-1)^{in}\delta_n(f)(A_i),1\right\ra\\
=&\left\la\sum\limits_{i=1}^{n+1}(-1)^{in}(\delta_n^0(f)(A_i)+\delta_n^{n+1}(f)(A_i)),1\right\ra.
\end{aligned}
$$
Since $\la\delta_n^0(f)(A_i)+(-1)^n\delta_n^{n+1}(f)(A_{i+1}),1\ra=0$
 for $1\le i\le n$, we have
$$
\begin{aligned}
&\la \delta_{n-1}(\Delta f)(a_1\otimes\dots\otimes a_n), a_{n+1}\ra+\la \Delta \delta_n(f)(a_1\otimes\dots\otimes a_n), a_{n+1}\ra\\
=&\la(-1)^n\delta_n^{n+1}(f)(A_1)+\delta_n^0(f)(A_{n+1}),1\ra\\
=&\la a_{n+1}f(\nu a_1\otimes\dots \otimes\nu a_n),1\ra-\la f(a_1\otimes\dots \otimes a_n)a_{n+1},1\ra\\
=&\la (f^{\nu}-f)(a_1\otimes\dots \otimes a_n),a_{n+1}\ra.
\end{aligned}
$$
\end{proof}

The next two corollaries follow directly from Lemma \ref{chainty}.

\begin{coro}
The map $\Delta$ defined by the formula \eqref{Delta} induces a map
$$\Delta:\HH^n(R)^{\nu\uparrow}\rightarrow \HH^{n-1}(R)^{\nu\uparrow}.$$
\end{coro}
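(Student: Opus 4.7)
The plan is to check two things: first, that $\Delta$ restricts to a map $\Delta\colon C^n(R)^\nu\rightarrow C^{n-1}(R)^\nu$; and second, that on this invariant subcomplex $\Delta$ sends cocycles to cocycles and coboundaries to coboundaries. Once both are verified the induced map on $\HH^*(R)^{\nu\uparrow}$ is well defined.

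The second point is essentially immediate from Lemma~\ref{chainty}. Indeed, for $f\in C^n(R)^\nu$ the right-hand side $f^\nu-f$ vanishes, giving $\delta_{n-1}(\Delta f)=-\Delta\delta_n(f)$. Applied to a cocycle $f$ this yields $\delta_{n-1}(\Delta f)=0$; and applied to a $g\in C^{n-1}(R)^\nu$ with $f=\delta_{n-1}g$ it gives $\Delta f=-\delta_{n-2}(\Delta g)$, which, once the first point is in hand, exhibits $\Delta f$ as a coboundary in the $\nu$-invariant complex because then $\Delta g\in C^{n-2}(R)^\nu$.

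For the first point, the aim is to show $\phi_\nu\circ\Delta=\Delta\circ\phi_\nu$. The key input is the standard Frobenius identity $\epsilon\circ\nu=\epsilon$, equivalently $\langle\nu a,\nu b\rangle=\langle a,b\rangle$, which also yields $\langle\nu^{-1}x,y\rangle=\langle x,\nu y\rangle$. Assuming $f^\nu=f$, I would compute $\langle(\Delta f)^\nu(a_1\otimes\cdots\otimes a_{n-1}),a_n\rangle$ by first transferring the outer $\nu^{-1}$ onto $a_n$ to obtain $\langle\Delta f(\nu a_1\otimes\cdots\otimes\nu a_{n-1}),\nu a_n\rangle$, then expanding through the defining formula of $\Delta f$, and finally using $f^\nu=f$ together with $\epsilon\circ\nu=\epsilon$ to strip the $\nu$'s one summand at a time. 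The resulting expression collapses termwise to $\langle\Delta f(a_1\otimes\cdots\otimes a_{n-1}),a_n\rangle$, and nondegeneracy of the form forces $(\Delta f)^\nu=\Delta f$.

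The only genuine obstacle is this $\nu$-equivariance step. The definition of $\Delta$ breaks symmetry by applying $\nu$ only to the ``wrapped-around'' arguments $\nu a_1,\dots,\nu a_{i-1}$, so compatibility with $\phi_\nu$ is not formal; it is precisely the Frobenius identity $\epsilon\circ\nu=\epsilon$ that makes the asymmetric $\nu$'s in the $i$-th summand line up correctly with those appearing after applying $\phi_\nu$. After this is settled, the rest of the corollary is a one-line consequence of Lemma~\ref{chainty}.
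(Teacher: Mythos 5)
Your proposal is correct and follows essentially the route the paper intends: the paper simply declares the corollary to "follow directly from Lemma 1", leaving implicit the routine check that $\Delta$ preserves $\nu$-invariant cochains. Your explicit verification of $(\Delta f)^{\nu}=\Delta f$ for $f^{\nu}=f$ via $\epsilon\circ\nu=\epsilon$ (equivalently $\langle\nu a,\nu b\rangle=\langle a,b\rangle$), combined with Lemma 1 giving $\delta_{n-1}(\Delta f)=-\Delta\delta_n(f)$ on the invariant subcomplex so that cocycles go to cocycles and coboundaries to coboundaries, is exactly the intended argument, just with the omitted detail filled in.
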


\begin{coro}
$\HH^*(R)^{\nu}=\HH^*(R)$.
\end{coro}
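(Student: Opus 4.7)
The plan is to extract the corollary as an almost immediate consequence of Lemma \ref{chainty}, reading that identity as a chain homotopy between the identity map on $C^*(R)$ and the map $(\,)^\nu$ induced by the Nakayama automorphism.

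Concretely, I would take a class $\xi \in \HH^n(R)$ represented by a cocycle $f\in C^n(R)$, so $\delta_n f = 0$. Substituting into the lemma gives
\[
f^{\nu} - f = \delta_{n-1}(\Delta f) + \Delta\delta_n(f) = \delta_{n-1}(\Delta f),
\]
which is a coboundary in $C^n(R)$. Hence $[f^\nu] = [f]$ in $\HH^n(R)$, i.e.\ the induced endomorphism $(\,)^\nu : \HH^n(R) \to \HH^n(R)$ is the identity. This holds for every $n$, so every cohomology class is fixed by $\nu$, giving $\HH^*(R)^\nu = \HH^*(R)$.

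There is essentially no obstacle: the content of the corollary is already packaged in the homotopy identity of Lemma \ref{chainty}, and the only thing to verify is that $\Delta f$ lives in $C^{n-1}(R)$ (which is immediate from its definition via the Frobenius form) so that $\delta_{n-1}(\Delta f)$ makes sense as a coboundary. One should also note that this is consistent with the earlier statement that $\Theta_R^\nu$ is an isomorphism when $\mathrm{ord}(\nu)$ is invertible in $k$: the corollary strengthens the target side of that comparison by asserting that the $\nu$-fixed part of Hochschild cohomology is in fact everything, regardless of the order of $\nu$ or the characteristic of $k$.
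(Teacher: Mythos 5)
Your argument is correct and is exactly the intended one: the paper gives no separate proof, stating only that the corollary follows directly from Lemma \ref{chainty}, and your reading of that identity as a chain homotopy between the identity and $(\,)^{\nu}$ (so that $f^{\nu}-f=\delta_{n-1}(\Delta f)$ for a cocycle $f$, hence $(\,)^{\nu}$ acts as the identity on $\HH^n(R)$) is precisely that direct deduction.
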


For a special case, the second corollary is proved in \cite{GuGu}.
Now we are able to formulate the following generalization of Theorem \ref{Tradler}.

\begin{theorem}\label{Tradler_new}
Let $R$ be a Frobenius algebra with bilinear form $\la\,,\,\ra$ and Nakayama automorphism $\nu$. Then $\Delta$ defined by \eqref{Delta} induces a $BV$-algebra structure on the Gerstenhsber algebra $(\HH^*(R)^{\nu\uparrow},\smile,[\,,\,])$.
\end{theorem}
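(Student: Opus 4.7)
The plan is to adapt Tradler's original proof of Theorem \ref{Tradler} by tracking carefully how the Nakayama automorphism enters the definition of $\Delta$. The corollary following Lemma \ref{chainty} already gives well-definedness of $\Delta$ on $\HH^*(R)^{\nu\uparrow}$, so what is left to establish for Theorem \ref{Tradler_new} is the identity $\Delta\circ\Delta=0$ on $\HH^*(R)^{\nu\uparrow}$ and the BV relation \eqref{GerBVeq} for cup-products of $\nu$-invariant cocycles.

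For $\Delta\circ\Delta=0$, I would expand $\Delta_i\Delta_j f$ by the defining pairing and obtain a double sum indexed by pairs $(i,j)$ with $1\le j\le n$ and $1\le i\le n-1$. Using the defining property of the Nakayama automorphism in the form $\la a,b\ra=\la b,\nu a\ra$, one can transport the outer $a_n$ through the pairing so that each term becomes a value of $f$ evaluated on a cyclically shifted argument, now with appropriate $\nu$-twists. The signs $(-1)^{i(n-1)}(-1)^{j(n-2)}$ combined with a reindexing $(i,j)\leftrightarrow (j',i')$ make all terms cancel pairwise, exactly as in Tradler's symmetric-algebra argument; the Nakayama twists produced by the two cyclic shifts match up precisely because of the defining $\la\,,\,\ra$-identity for $\nu$, so the cancellation still works at the cochain level.

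For the BV identity, fix $f\in C^n(R)^\nu$ and $g\in C^m(R)^\nu$ and expand the three terms $\Delta(f\smile g)$, $\Delta(f)\smile g$, $f\smile\Delta(g)$ using \eqref{Delta}. The resulting sums are indexed by the position $i$ at which the cyclic shift in $\Delta$ is applied, and they naturally split into three regimes: $i$ lies inside the arguments of $f$, inside the arguments of $g$, or the shift ``crosses the boundary'' of the cup product. The first two regimes recover $\Delta(f)\smile g$ and $\pm f\smile\Delta(g)$ respectively, while the mixed terms reassemble, after using $\nu$-invariance of $f$ and $g$ to absorb the $\nu$-twists produced by the cyclic shifts, into the Gerstenhaber operations $f\circ_i g$ and $g\circ_i f$. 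The signs $(-1)^{(m-1)(i-1)}$ built into the definition of $\circ$ fall out of a careful reindexing. The identity then holds modulo an explicit coboundary $\delta_{n+m-2}h$, where $h\in C^{n+m-2}(R)^\nu$ is an auxiliary cochain of the same shape as Tradler's, but with Nakayama twists inserted in the slots cycled past the pairing.

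The principal obstacle is the bookkeeping for the BV identity: in the symmetric case ($\nu=\mathrm{id}$) many of the intermediate identifications are automatic, but with $\nu$ nontrivial one must repeatedly invoke $\la a,b\ra=\la b,\nu a\ra$ to move arguments across the pairing, and verify that the total count of $\nu$'s in each summand is precisely what $\nu$-invariance of $f$ and $g$ allows one to absorb. Equally delicate is the construction of the auxiliary cochain $h$: it must carry $\nu$-twists in exactly the right positions so that $h\in C^{n+m-2}(R)^\nu$, ensuring that the identity descends to $\HH^*(R)^{\nu\uparrow}$ rather than only to $\HH^*(R)$. This compatibility with the $\nu$-invariant subcomplex is what distinguishes the proof from the symmetric case.
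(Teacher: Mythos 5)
There is a genuine gap in your treatment of $\Delta\circ\Delta=0$. Expanding $\Delta_i\Delta_j f$ does \emph{not} lead to pairwise sign cancellation at the cochain level: each term has the form $\la f(b_j\otimes\cdots\otimes b_{n-1}\otimes 1\otimes \nu b_1\otimes\cdots\otimes \nu b_{j-1}),1\ra$, where $(b_1,\ldots,b_{n-1})$ is a ($\nu$-twisted) cyclic rotation of $(a_1,\ldots,a_{n-1})$ determined by $i$, and the inserted unit sits in a slot determined by $j$. Distinct pairs $(i,j)$ produce distinct argument tuples (different rotation, different position of the unit), so there is no matching of terms to cancel; already for $n=2$ one gets $\la\Delta\Delta f,a\ra=-\la f(a\otimes 1),1\ra+\la f(1\otimes\nu a),1\ra$, which is not identically zero on cochains. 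The actual mechanism — and the one the paper uses — is normalization: every term of $\Delta\Delta f$ contains a unit among the arguments of $f$, hence vanishes termwise once $f$ is a \emph{normalized} cocycle. The nontrivial point in the twisted setting, which your proposal does not address, is that this reduction must be carried out inside the $\nu$-invariant subcomplex: the paper checks that the degeneracy-type homotopies $s_n^i$ restrict to maps $s_n^{i,\nu}:C^{n+1}(R)^{\nu}\rightarrow C^n(R)^{\nu}$ and then, by induction, replaces a representative $f\in\Ker\delta_n^{\nu}$ by $f-\delta_{n-1}^{\nu}s_{n-1}^{N-1,\nu}(f)$, so that every class in $\HH^n(R)^{\nu\uparrow}$ (not merely in $\HH^n(R)$) has a normalized $\nu$-invariant representative. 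Without this equivariant normalization your argument does not establish $\Delta^2=0$ on $\HH^*(R)^{\nu\uparrow}$.

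Your sketch of the BV identity is closer to what the paper does: the paper sets $\Delta'(f\otimes g)=\sum_{i=1}^m(-1)^{i(n+m-1)}\Delta_i(f\smile g)$ and uses the $\nu$-invariance of $f$ and $g$ (together with $\la a,b\ra=\la b,\nu a\ra$) to prove the key symmetry $\Delta(f\smile g)=\Delta'(f\otimes g)+(-1)^{nm}\Delta'(g\otimes f)$, after which it defers to Tradler's computation; your ``three regimes'' decomposition and the requirement that the auxiliary homotopy cochain lie in $C^{n+m-2}(R)^{\nu}$ so that the identity descends to $\HH^*(R)^{\nu\uparrow}$ are the right points to insist on there. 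So the second half is acceptable as a plan, but the first half needs to be replaced by the normalization argument described above.
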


\begin{proof} Let us prove that $\Delta\circ\Delta=0$ in $\HH^*(R)^{\nu\uparrow}$. It is enough to prove that any element of $\HH^n(R)^{\nu\uparrow}$ can be represented by an element $f\in \Ker\delta_n^{\nu}$ such that $f(a_1\otimes\dots\otimes a_n)=0$ if $a_i=1$ for some $1\le i\le n$. Let us define the maps
$s_n^i:C^{n+1}(R)\rightarrow C^n(R)$ ($0\le i\le n$) by the formula
$$(s_n^i(g))(a_1\otimes \dots\otimes a_n)=(-1)^ig(a_1\otimes \dots\otimes a_i\otimes 1\otimes a_{i+1}\dots\otimes a_n)$$
for $g\in C^{n+1}(R)$. It is clear that $s_n^i$ induces a map $s_n^{i,\nu}:C^{n+1}(R)^{\nu}\rightarrow C^n(R)^{\nu}$.
Now we can prove by induction on $N$ that any element $f'\in\HH^n(R)^{\nu\uparrow}$ can be represented by an element $f\in \Ker\delta_n^{\nu}$ such that $f(a_1\otimes\dots\otimes a_n)=0$ if $a_i=1$ for some $1\le i\le N$. Indeed, if $N=0$, then the required assertion is obvious. Now assume that the cohomology class of $f$ is equal to $f'$ and $f(a_1\otimes\dots\otimes a_n)=0$ if $a_i=1$ for some $1\le i\le N-1$. Then direct calculations show that
$$(f-\delta_{n-1}^{\nu}s_{n-1}^{N-1,\nu}(f))(a_1\otimes\dots\otimes a_n)=(s_{n}^{N-1,\nu}\delta_n^{\nu}(f))(a_1\otimes\dots\otimes a_n)=0$$
 if $a_i=1$ for some $1\le i\le N$.

Now assume that $f\in C^n(R)^{\nu}$, $g\in C^m(R)^{\nu}$. Let us introduce the following notation
$$
\Delta'(f\otimes g):=\sum\limits_{i=1}^m(-1)^{i(n+m-1)}\Delta_i(f\smile g).
$$
Then $\Delta(f\smile g)=\Delta'(f\otimes g)+(-1)^{nm}\Delta'(g\otimes f)$.

The remaining part of the proof is analogous to the proof of Theorem \ref{Tradler} (cf. \cite{Tradler}).
\end{proof}

Note that a Frobenius algebra is symmetric if and only if its Nakayama automorphism equals ${\rm Id}_R$ for some bilinear form. So Theorem \ref{Tradler} is a special case of Theorem \ref{Tradler_new}.

\begin{coro}
If ${\rm ord}(\nu)<\infty$ and $\charr k{\not|}{\rm ord}(\nu)$, then the map defined by the formula \eqref{Delta} induces a BV-differential on
the Gerstenhaber algebra $(\HH^*(R),\smile,[\,,\,])$.
\end{coro}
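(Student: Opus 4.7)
The plan is to transport the BV-structure of Theorem \ref{Tradler_new} across the isomorphism $\Theta_R^{\nu}$. Under the hypotheses $\mathrm{ord}(\nu)<\infty$ and $\charr k{\not|}\mathrm{ord}(\nu)$, the discussion preceding Theorem \ref{Tradler_new} asserts that $\Theta_R^{\nu}\colon\HH^*(R)^{\nu\uparrow}\to\HH^*(R)^{\nu}$ is a bijective homomorphism of Gerstenhaber algebras, and the corollary to Lemma \ref{chainty} identifies $\HH^*(R)^{\nu}$ with $\HH^*(R)$. Composing yields an isomorphism of Gerstenhaber algebras $\HH^*(R)^{\nu\uparrow}\simeq\HH^*(R)$; transporting through it the BV-operator provided by Theorem \ref{Tradler_new} endows $\HH^*(R)$ with a BV-differential $\tilde\Delta$ automatically satisfying $\tilde\Delta\circ\tilde\Delta=0$ and the identity \eqref{GerBVeq}.

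It then remains to identify $\tilde\Delta$ with the map on cohomology induced by the cochain formula \eqref{Delta}. For this I would introduce the averaging operator
$$\pi=\frac{1}{\mathrm{ord}(\nu)}\sum\limits_{k=0}^{\mathrm{ord}(\nu)-1}\phi_{\nu^k}\colon C^*(R)\to C^*(R)^{\nu},$$
which is well-defined because $\mathrm{ord}(\nu)$ is invertible in $k$, and which is a cochain map since each $\phi_{\nu^k}$ commutes with $\delta$. Because $(\,\cdot\,)^{\nu}$ acts as the identity on $\HH^*(R)$ by the corollary to Lemma \ref{chainty}, $\pi$ induces the identity on cohomology, so every class $\alpha\in\HH^n(R)$ admits a $\nu$-invariant representative $\tilde f=\pi(f)$. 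Lemma \ref{chainty} then gives $\delta_{n-1}\Delta\tilde f=0$, and the isomorphism $\Theta_R^{\nu}$ sends $[\Delta\tilde f]\in\HH^{n-1}(R)^{\nu\uparrow}$ to $\tilde\Delta(\alpha)$. Thus $\tilde\Delta$ is precisely the operator on cohomology induced by the cochain formula \eqref{Delta}, evaluated on $\nu$-invariant representatives.

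The one point that deserves a small verification is that $\Delta$ commutes with $\phi_{\nu}$ at the cochain level, so that $\pi$ and $\Delta$ commute and $\Delta\tilde f$ itself lies in $C^{n-1}(R)^{\nu}$; this follows from the defining formula \eqref{Delta} together with the $\nu$-invariance $\la\nu a,\nu b\ra=\la a,b\ra$ of the Frobenius form. Once this routine check is in place, everything else is formal transport along $\Theta_R^{\nu}$, and no further BV-identities need to be verified directly on $\HH^*(R)$.
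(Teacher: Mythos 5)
Your proposal is correct and follows essentially the same route as the paper: transport the BV-structure of Theorem \ref{Tradler_new} along the Gerstenhaber-algebra isomorphism $\Theta_R^{\nu}$, identifying the target with $\HH^*(R)$ via Corollary 2 of Lemma \ref{chainty}. Your extra verifications (the averaging operator $\pi$ and the commutation of $\Delta$ with $\phi_{\nu}$, which follows from the $\nu$-invariance of the Frobenius form) merely make explicit what the paper's two-line proof leaves implicit, so no further comment is needed.
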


\begin{proof} This follows from Corollary 2 of Lemma \ref{chainty} and the fact that $\Theta_R^{\nu}:\HH^*(R)^{\nu\uparrow}\rightarrow \HH^*(R)^{\nu}$ is an isomorphism of Gerstenhaber algebras if ${\rm ord}(\nu)<\infty$ and $\charr k{\not|}{\rm ord}(\nu)$.
\end{proof}

\section{On a family of self-injective algebras of tree type $D_n$}

Let $n\ge 4$, $r\ge 1$. Let us define a $k$-algebra $R=R(n,r)$.
Consider a  quiver with relations $(\mathcal Q,I)$. Its set of vertices is $\mathcal Q_0= \mathbb{Z}_r\times \{j\in \mathbb N \mid 1\le j \le n\}$. The set of arrows $\mathcal Q_1$ of the quiver $\mathcal Q$ consists of the following elements:
$$
\begin{aligned}
&\gamma_{i,p}:(i,n-2)\rightarrow (i,p),\:\:\beta_{i,p}:(i,p)\rightarrow (i+1,1),\\
&\alpha_{i,j}: (i,j)\rightarrow (i,j+1)\hspace{0.2cm}(1\le i\le r, p\in\{n-1,n\}, 1\le j\le n-3).
\end{aligned}
$$
In addition we use the following auxiliary notation:
$$
 \tau_i=\beta_{i,n}\gamma_{i,n},\:\:\omega_{i,j_2,j_1}=\alpha_{i,j_2}\dots \alpha_{i,j_1},\:\: \mu_{i,j}=\omega_{i,j,1},\:\:\eta_{i,j}=\omega_{i,n-3,j}.
 $$
From here on for the uniformity of the notation
 we additionally suppose that the empty product of the arrows of the quiver
 is identified with an appropriate idempotent of the algebra $R$;
 for example, $\mu_{i,0}=e_{i,1}$, $\omega_{i,j-1,j}=e_{i,j}$, $\eta_{i,n-2}=e_{i,n-2}$.
Denote by $\phi:\{1,\dots,n\}\rightarrow\{1,\dots,n\}$ the map such that $\phi(j)=j$ for $1\le j\le n-2$, $\phi(n-1)=n$ and $\phi(n)=n-1$.

The ideal $I$ is generated by the elements
$$
\begin{aligned}
&\gamma_{i,\phi(p)}\eta_{i,1}\beta_{i-1,p},\hspace{0.1cm}\beta_{i,n-1}\gamma_{i,n-1}-\tau_{i},\hspace{0.1cm}\mu_{i+1,j}\tau_i\eta_{i,j}\\
&(1\le i\le r, 1\le j\le n-3, p\in\{n-1, n\}).
\end{aligned}
 $$
$$
  \xymatrix@=0.45cm
  {
  &&&&\txt{\scriptsize $i,n-1$}\ar@{->}[dl]_*\txt{\tiny $\beta_{i,n-1}$}\\
  &&\txt{\scriptsize i+1,2} \ar@{->}[dll]_*\txt{\tiny $\alpha_{i+1,2}$}&\txt{\scriptsize i+1,1}\ar@{->}[l]_*\txt{\tiny $\alpha_{i+1,1}$}&&
  \txt{\scriptsize i,n-2}\ar@{->}[ul]_*\txt{\tiny $\gamma_{i,n-1}$}\ar@{->}[dl]^*\txt{\tiny $\gamma_{i,n}$}&
  \txt{\scriptsize i,n-3}\ar@{->}[l]_*\txt{\tiny $\alpha_{i,n-3}$}&\\
  &&&&\txt{\scriptsize $i,n$}\ar@{->}[ul]^*\txt{\tiny $\beta_{i,n}$}&&&&\ar@{->}[ull]_*\txt{\tiny $\alpha_{i,n-4}$}\\
  \\
  \ar@{.}[uu]\ar@{->}[dr]_*\txt{\tiny $\alpha_{r,n-3}$}&&\txt{\scriptsize $r,n-1$}\ar@{->}[dr]^*\txt{\tiny $\beta_{r,n-1}$}&&&&
  \txt{\scriptsize $1,n-1$}\ar@{->}[dr]^*\txt{\tiny $\beta_{1,n-1}$}&&\ar@{.}[uu]\\
  &\txt{\scriptsize r,n-2}\ar@{->}[ur]^*\txt{\tiny $\gamma_{r,n-1}$}\ar@{->}[dr]_*\txt{\tiny $\gamma_{r,n}$}&&
  \txt{\scriptsize 1,1}\ar@{->}[r]_*\txt{\tiny $\alpha_{1,1}$}&\cdots\ar@{->}[r]_*\txt{\tiny $\alpha_{1,n-3}$}&
  \txt{\scriptsize 1,n-2}\ar@{->}[ur]^*\txt{\tiny $\gamma_{1,n-1}$}\ar@{->}[dr]_*\txt{\tiny $\gamma_{1,n}$}&&
  \txt{\scriptsize 2,1}\ar@{->}[ur]_*\txt{\tiny $\alpha_{2,1}$}\\
  &&\txt{\scriptsize $r,n$}\ar@{->}[ur]_*\txt{\tiny $\beta_{r,n}$}&&&&\txt{\scriptsize $1,n$}\ar@{->}[ur]_*\txt{\tiny $\beta_{1,n}$}
  }
$$
Set $R=R(n,r)=k\mathcal Q/I$. It is easily verified that $R$ is a Frobenius algebra. For two paths $w_1$ and $w_2$ of the quiver $\mathcal Q$ the bilinear form $\la\,,\,\ra$ is defined in the following way:
$$
\la w_1,w_2\ra=\epsilon(w_1w_2):=
\begin{cases}
1,&\mbox{if $w_1w_2$ is a nonzero path of length $n-1$,}\\
0&\mbox{overwise.}
\end{cases}
$$
This bilinear form determines the Nakayama automorphism $\nu$. This automorphism is defined on idempotents and arrows by the formulas
$$
\begin{aligned}
&\nu(e_{i,t})=e_{i-1,t},\:\:\nu(\alpha_{i,j})=\alpha_{i-1,j},\:\:\nu(\gamma_{i,p})=\gamma_{i-1,p},\:\:\nu(\beta_{i,p})=\beta_{i-1,p}\\
&(1\le i\le r,1\le t\le n, 1\le j\le n-2, p\in\{n-1,n\}).
\end{aligned}
$$
It is clear that the set
$$
 \begin{aligned}
B_R=&\{\mu_{i+1,t-1}\tau_i\eta_{i,j}\mid 1\le
i\le r,1\le t\le j\le n-2\}\\
&\cup\{\omega_{i,t-1,j}\mid 1\le i\le r,1\le j\le
t\le n-2\}\\
 & \cup\{\gamma_{i,p}\eta_{i,j}\mid 1\le i\le r, 1\le j\le n-2, p\in\{n-1,n\}\}\\
& \cup\{\mu_{i+1,j-1}\beta_{i,p}\mid 1\le i\le r, 1\le j\le n-2, p\in\{n-1,n\}\}\\
 &
\cup\{\gamma_{i,p}\mu_{i,n-3}\gamma_{i-1,p}\mid 1\le
 i\le r, p\in\{n-1,n\}\}
 \end{aligned}
$$
is a $k$-basis of $R$. We define for $b\in B_R$ the element $\bar b\in B_R$ by the equalities
$$
 \begin{aligned}
\overline{\mu_{i+1,t-1}\tau_i\eta_{i,j}}=&\omega_{i+1,j-1,t}\hspace{0.2cm}(1\le i\le r,1\le t\le j\le n-2)\\
\overline{\omega_{i,t-1,j}}=&\mu_{i+1,j-1}\tau_{i}\eta_{i,t}\hspace{0.2cm}(1\le i\le r,1\le j\le t\le n-2)\\
\overline{\gamma_{i,p}\eta_{i,j}}=&\mu_{i+1,j-1}\beta_{i,p}\hspace{0.2cm}(1\le i\le r, 1\le j\le n-2, p\in\{n-1,n\})\\
\overline{\mu_{i+1,j-1}\beta_{i,n-1}}=&\gamma_{i+1,p}\eta_{i+1,j}\hspace{0.2cm}(1\le i\le r, 1\le j\le n-2, p\in\{n-1,n\})\\
\overline{\gamma_{i,p}\mu_{i,n-3}\gamma_{i-1,p}}=&e_{i,p}\hspace{0.2cm}(1\le i\le r, p\in\{n-1,n\}).\\
 \end{aligned}
$$
It is easy to show that $\la a,b\ra=\begin{cases}1,&\mbox{if $b=\bar a$,}\\0,&\mbox{if $b\not=\bar a$,}\end{cases}$ for $a,b\in B_R$.

The Hochschild cohomology algebra of $R(n,r)$ is described in \cite{VolkGen} and \cite{Volk}.
Let us recall some results of these works. Denote by $e_x$ the idempotent of the algebra $R$ corresponding to a vertex $x$ of the quiver $\mathcal Q$. Then $\{e_x\otimes e_y\}_{x,y}$ is a full set of orthogonal primitive idempotents for the algebra $\Lambda$. Denote by $P_{[x][y]}=\Lambda e_{x}\otimes e_{y}$ the projective $\Lambda$-module, which corresponds to idempotent $e_{x}\otimes
e_{y}$. Let $\si:R\rightarrow R$ be the automorphism of $R$, which is defined on the idempotents and arrows by the formulas
$$
\begin{aligned}
&\sigma(e_{i,j})=e_{i+n-1,\phi^{n}(j)},\sigma(\alpha_{i,j})=\alpha_{i+n-1,j},\sigma(\gamma_{i,p})=-\gamma_{i+n-1,\phi^{n}(p)},\\
&\sigma(\beta_{i,p})=\beta_{i+n-1,\phi^{n}(p)}\hspace{0.2cm}(1\le i\le r, 1\le j\le n-2, p\in\{n-1,n\}).
\end{aligned}$$

Denote by $Q_t$ the $t$-th module in the minimal projective bimodule resolution of $R$. Then
$$
\begin{aligned}
Q_{2m}&=  \bigoplus\limits_{i=1}^r
 \bigg(
    \Big(
        {\:\bigoplus\limits_{j=1}^{n-2-m}P_{[i+m,j+m][i,j]}}
    \Big)\\
  &\oplus
   \Big(
  {\bigoplus\limits_{j=n-1-m}^{n-2}P_{[i+m,j+m-(n-2)][i,j]}
  }
  \Big)\\
 & \oplus
  P_{[i+m,\phi^m(n-1)][i,n-1]} \oplus
 P_{[i+m,\phi^m(n)][i,n]}
  \bigg)\hspace{0.5cm}
(0\le m\le n-2),\\
 Q_{2m+1}&=\bigoplus\limits_{i=1}^r
\bigg(\Big(\bigoplus\limits_{j=1}^{n-3-m}P_{[i+m,j+m+1][i,j]}
\Big)\oplus
P_{[i+m,n-1][i,n-2-m]}\notag\\
 &\oplus
P_{[i+m,n][i,n-2-m]}\oplus
\Big(\bigoplus\limits_{j=n-1-m}^{n-2}P_{[i+m+1,j+m-(n-2)][i,j]}
 \Big)\notag\\
& \oplus P_{[i+m+1,m+1][i,n-1]}\oplus
 P_{[i+m+1,m+1][i,n]}\bigg)\hspace{0.5cm}(0\le m\le n-3).
\end{aligned}
$$
Moreover, $Q_{t+l(2n-3)}={}_{\si^l}(Q_t)_1$ for $0\le t\le 2n-4$, $l>0$. The definitions of the differentials $d_t^Q:Q_{t+1}\rightarrow Q_t$ can be found in \cite{VolkGen}. The augmentation map $\mu:Q_0\rightarrow R$ is defined by the formula $\mu(a\otimes b)=ab\in R$ for $a\otimes b\in P_{[x][x]}$ ($x\in \mathcal Q_0$). It is easy to check that ${}_{\nu}(P_{[i_1,j_1][i_2,j_2]})_{\nu}\simeq P_{[i_1+1,j_1][i_2+1,j_2]}$. These isomorphisms give isomorphisms $\theta_t:Q_t\rightarrow {}_{\nu}(Q_t)_{\nu}$ for $t\ge 0$. Using the description of the differentials $d_t^Q$ it is easy to verify that $\theta_*$ is a chain map, which lifts the isomorphism $\phi_{\nu}:R\rightarrow {}_{\nu}R_{\nu}$ defined by the formula $\phi_{\nu}(a)=\nu a$ for $a\in R$. Let $f\in\Hom(Q_t,R)$, $fd_t^Q=0$ and $f$ is defined on the direct summands of $Q_t$ by the equalities $f(e_{[i_1,j_1][i_2,j_2]})=f_{[i_1,j_1][i_2,j_2]}\in R$. It follows from \cite[Lemma 2]{Volk_new} that $f^{\nu}$ is defined by the equalities
$$
f^{\nu}(e_{[i_1,j_1][i_2,j_2]})=\nu^{-1}f_{[i_1-1,j_1][i_2-1,j_2]}.
$$

To calculate the Lie bracket we will partially use the algorithm described in \cite{IIVZ}. To apply this algorithm we need the homomorphism of left $R$-modules $D_t:Q_t\rightarrow Q_{t+1}$ ($t\ge 0$), $D_{-1}:R\rightarrow Q_0$, which satisfy the equations
\begin{equation}\label{weak_homot}
\begin{aligned}
D_{t+1}D_t&=0\hspace{0.2cm}(t\ge 0),\:\:D_{t-1}d_{t-1}^Q+d_t^QD_t={\rm Id}_{Q_t}\hspace{0.2cm}(t\ge 1),\\
D_{-1}\mu+d_0^QD_0&={\rm Id}_{Q_0},\:\:\mu D_{-1}={\rm Id}_R.
\end{aligned}
\end{equation}
Let $D_{-1}:R\rightarrow Q_0$ be a homomorphism of left modules such that $D_{-1}(e_x)=e_{[x][x]}$ for all $x\in \mathcal Q_0$.

Now let us define the homomorphisms $D_t$ ($t\ge 0$). We will define them on the elements of the form $e_x\otimes b$ ($x\in\mathcal{Q}_0$, $b\in B_R$).  Let $0\le m\le n-3$. Then\\
$D_{2m}(e_{i+m,j+m}\otimes e_{i,j}b)$ $(1\le i\le r,1\le j\le n-2-m)$ is equal to\\
$$\sum\limits_{s=q}^{j-1}\omega_{i+m,j+m-1,s+m+1}\otimes\omega_{i,s-1,q}$$
if $b=\omega_{i,j-1,q}$, $1\le q\le j$,\\
$$\sum\limits_{s=1}^{j-1}\omega_{i+m,j+m-1,s+m+1}\otimes\mu_{i,s-1}\beta_{i-1,p}
+\omega_{i+m,j+m-1,m+1}\otimes e_{i-1,p}$$
if $b=\mu_{i,j-1}\beta_{i-1,p}$, $p\in\{n-1,n\}$,\\
$$\begin{aligned}
&\sum\limits_{s=1}^{j-1}\omega_{i+m,j+m-1,s+m+1}\otimes\mu_{i,s-1}\tau_{i-1}\eta_{i-1,q}\\
+&\omega_{i+m,j+m-1,m+1}\otimes \gamma_{i-1,n-1}\eta_{i-1,q}-\omega_{i+m,j+m-1,q+m-(n-2)}\otimes e_{i-1,q}
\end{aligned}$$ if $b=\mu_{i,j-1}\tau_{i-1}\eta_{i-1,q}$, $n-1-m\le q\le n-2$,\\
$$
\begin{aligned}
&\sum\limits_{s=1}^{j-1}\omega_{i+m,j+m-1,s+m+1}\otimes\mu_{i,s-1}\tau_{i-1}\eta_{i-1,q}\\
+&\omega_{i+m,j+m-1,m+1}\otimes \gamma_{i-1,n-1}\eta_{i-1,q}\\
+&\mu_{i+m,j+m-1}\beta_{i+m-1,\phi^m(n-1)}\otimes \omega_{i-1,n-3-m,q}\\
+&\sum\limits_{s=q}^{n-3-m}\mu_{i+m,j+m-1}\tau_{i+m-1}\eta_{i+m-1,s+m+1}\otimes \omega_{i-1,s-1,q}
\end{aligned}$$
if $b=\mu_{i,j-1}\tau_{i-1}\eta_{i-1,q}$, $j\le q\le n-2-m$;\\
$D_{2m}(e_{i+m,j+m-(n-2)}\otimes e_{i,j}b)$ $(1\le i\le r,n-1-m\le j\le n-2)$ is equal to 0 if $b\not=\mu_{i,j-1}\tau_{i-1}\eta_{i-1,j}$, and is equal to $-e_{i+m,j+m-(n-2)}\otimes e_{i-1,j}$ if $b=\mu_{i,j-1}\tau_{i-1}\eta_{i-1,j}$;\\
$D_{2m}(e_{i+m,\phi^m(p)}\otimes e_{i,p}b)$ $(1\le i\le r,p\in\{n-1,n\})$ is equal to 0\\
if $b=e_{i,p}$ or $b=\gamma_{i,p}\eta_{i,q}$, $n-1-m\le q\le n-2$,\\
$$
e_{i+m,\phi^m(p)}\otimes \omega_{i,n-3-m,q}+\sum\limits_{s=q}^{n-3-m}\gamma_{i+m,\phi^m(p)}\eta_{i+m,s+m+1}\otimes\omega_{i,s-1,q}
$$
if $b=\gamma_{i,p}\eta_{i,q}$, $1\le q\le n-2-m$,\\
$$
\begin{aligned}
&e_{i+m,\phi^m(p)}\otimes \mu_{i,n-3-m}\beta_{i-1,p}\\
+&\sum\limits_{s=q}^{n-3-m}\gamma_{i+m,\phi^m(p)}\eta_{i+m,s+m+1}\otimes\mu_{i,s-1,q}\beta_{i-1,p}\\
+&\gamma_{i+m,\phi^m(p)}\eta_{i+m,m+1}\otimes e_{i-1,p}
\end{aligned}
$$
if $b=\gamma_{i,p}\eta_{i,1}\beta_{i-1,p}$.

Let $0\le m\le n-3$. Then\\
$D_{2m+1}(e_{i+m,j+m+1}\otimes e_{i,j}b)$ $(1\le i\le r,1\le j\le n-3-m)$ is equal to 0 if $b\not=\mu_{i,j-1}\tau_{i-1}\eta_{i-1,j}$, and is equal to $e_{i+m,j+m+1}\otimes e_{i-1,j}$ if $b=\mu_{i,j-1}\tau_{i-1}\eta_{i-1,j}$;\\
$D_{2m+1}(e_{i+m+1,j+m-(n-2)}\otimes e_{i,j}b)$ $(1\le i\le r,n-1-m\le j\le n-2)$ is equal to\\
$$\sum\limits_{s=q}^{j-1}\omega_{i+m+1,j+m-(n-1),s+m+(n-3)}\otimes\omega_{i,s-1,q}$$
if $b=\omega_{i,j-1,q}$, $n-1-m\le q\le j$,\\
$$\sum\limits_{s=n-2-m}^{j-1}\omega_{i+m+1,j+m-(n-1),s+m+(n-3)}\otimes\omega_{i,s-1,q}$$
if $b=\omega_{i,j-1,q}$, $1\le q\le n-2-m$,\\
$$\begin{aligned}
&\sum\limits_{s=n-2-m}^{j-1}\omega_{i+m+1,j+m-(n-1),s+m+(n-3)}\otimes\mu_{i,s-1}\beta_{i-1,n-1}\\
+&\mu_{i+m+1,j+m-(n-1)}\beta_{i+m,\phi^{m+1}(n-1)}\otimes e_{i-1,n-1}
\end{aligned}$$
if $b=\mu_{i,j-1}\beta_{i-1,n-1}$,\\
$$
\begin{aligned}
&\sum\limits_{s=n-2-m}^{j-1}\omega_{i+m+1,j+m-(n-1),s+m+(n-3)}\otimes\mu_{i,s-1}\beta_{i-1,n}\\
-&\mu_{i+m+1,j+m-(n-1)}\beta_{i+m,\phi^{m+1}(n)}\otimes e_{i-1,n}
\end{aligned}$$
if $b=\mu_{i,j-1}\beta_{i-1,n}$,\\
$$
\begin{aligned}
&\sum\limits_{s=n-2-m}^{j-1}\omega_{i+m+1,j+m-(n-1),s+m+(n-3)}\otimes\mu_{i,s-1}\tau_{i-1}\eta_{i-1,q}\\
-&\mu_{i+m+1,j+m-(n-1)}\beta_{i+m,\phi^{m+1}(n)}\otimes \gamma_{i-1,n}\eta_{i-1,q}\\
+&\mu_{i+m+1,j+m-(n-1)}\beta_{i+m,\phi^{m+1}(n-1)}\otimes \gamma_{i-1,n-1}\eta_{i-1,q}\\
-&\sum\limits_{s=q}^{n-2}\mu_{i+m+1,j+m-(n-1)}\tau_{i+m}\eta_{i+m,s+m-(n-3)}\otimes\omega_{i-1,s-1,q}
\end{aligned}$$
if $b=\mu_{i,j-1}\tau_{i-1}\eta_{i,q}$ ($j\le q\le n-2$);\\
$D_{2m+1}(e_{i+m+1,m+1}\otimes e_{i,n-1}b)$ $(1\le i\le r)$ is equal to 0\\
if $b\not=\gamma_{i,n-1}\eta_{i,1}\beta_{i-1,n-1}$, and is equal to
$$\begin{aligned}
&\sum\limits_{s=n-2-m}^{n-2}\omega_{i+m+1,m,s+m-(n-1)}\otimes\mu_{i,s-1}\beta_{i-1,n-1}\\
+&\mu_{i+m+1,m}\beta_{i+m,\phi^{m+1}(n-1)}\otimes e_{i-1,n-1}
\end{aligned}$$
if $b=\gamma_{i,n-1}\eta_{i,1}\beta_{i-1,n-1}$;\\
$D_{2m+1}(e_{i+m+1,m+1}\otimes e_{i,n}b)$ $(1\le i\le r)$ is equal to 0\\
if $b=e_{i,n}$,\\
$$-\sum\limits_{s=q}^{n-2}\omega_{i+m+1,m,s+m+(n-3)}\otimes\omega_{i,s-1,q}$$
if $b=\gamma_{i,n}\eta_{i,q}$, $n-1-m\le q\le n-2$,\\
$$-\sum\limits_{s=n-2-m}^{n-2}\omega_{i+m+1,m,s+m+(n-3)}\otimes\omega_{i,s-1,q}$$
if $b=\gamma_{i,n}\eta_{i,q}$, $1\le q\le n-2-m$,\\
$$
\begin{aligned}
&-\sum\limits_{s=n-2-m}^{n-2}\omega_{i+m+1,m,s+m-(n-1)}\otimes\mu_{i,s-1}\beta_{i-1,n-1}\\
&+\mu_{i+m+1,m}\beta_{i+m,\phi^{m+1}(n)}\otimes e_{i-1,n}
\end{aligned}
$$
if $b=\gamma_{i,n}\eta_{i,1}\beta_{i-1,n}$;\\
$D_{2m+1}(e_{i+m,\phi^m(p)}\otimes e_{i,n-2-m}b)$ $(1\le i\le r,p\in\{n-1,n\})$ is equal to 0\\
if $b=\omega_{i,n-3-m,q}$, $1\le q\le n-2-m$ or $b=\mu_{i,n-3-m}\beta_{i-1,p}$,\\
$$e_{i+m\phi^m(p)}\otimes e_{i-1,\phi(p)}$$
if $b=\mu_{i,n-3-m}\beta_{i-1,\phi(p)}$,\\
$$e_{i+m\phi^m(n)}\otimes \gamma_{i-1,n-1}\eta_{i,q}$$
if $p=n$, $b=\mu_{i,n-3-m}\tau_{i-1}\eta_{i-1,q}$ ($n-2-m\le q\le n-2$),\\
$$e_{i+m\phi^m(n-1)}\otimes \gamma_{i-1,n}\eta_{i,q}+\sum\limits_{s=q}^{n-2}\gamma_{i+m\phi^m(n-1)}\eta_{i+m,s+m-(n-3)}\otimes\omega_{i-1,s-1,q}$$
if $p=n-1$, $b=\mu_{i,n-3-m}\tau_{i-1}\eta_{i-1,q}$ ($n-2-m\le q\le n-2$).

We define the homomorphism $D_{2n-4}$ in the following way.\\
$D_{2n-4}(e_{i+n-2,j}\otimes e_{i,j}b)$ is equal to 0 if $b\not=\mu_{i,j-1}\tau_{i-1}\eta_{i,j}$, and is equal to $-e_{i+n-2,j}\otimes e_{i-1,j}$ if $b=\mu_{i,j-1}\tau_{i-1}\eta_{i,j}$;\\
$D_{2n-4}(e_{i+n-2,\phi^{n-2}(n-1)}\otimes e_{i,n-1}b)$ is equal to 0 if $b\not=\gamma_{i,n-1}\eta_{i-1,1}\beta_{i-1,n-1}$, and is equal to $-e_{i+n-2,\phi^{n-2}(n-1)}\otimes e_{i-1,n-1}$ if $b=\gamma_{i,n-1}\eta_{i-1,1}\beta_{i-1,n-1}$;\\
$D_{2n-4}(e_{i+n-2,\phi^{n-2}(n)}\otimes e_{i,n}b)$ is equal to 0 if $b\not=\gamma_{i,n}\eta_{i-1,1}\beta_{i-1,n}$, and is equal to $e_{i+n-2,\phi^{n-2}(n)}\otimes e_{i-1,n}$ if $b=\gamma_{i,n}\eta_{i-1,1}\beta_{i-1,n}$.

Moreover, $D_{t+l(2n-3)}=(D_t)_{(\si^l,1)}$ for $0\le t\le 2n-4$, $l>0$. Direct calculations show that the maps $D_t$ ($t\ge -1$) satisfy the equalities \eqref{weak_homot}.

Let $\theta_{\Bar,t}:\Bar_n(R)\rightarrow{}_{\nu}(\Bar_n(R))_{\nu}$ be defined by the formula $\theta_{\Bar,t}(a_0\otimes\dots\otimes a_{t+1})=\nu a_0\otimes\dots\otimes\nu a_{t+1}$.
It is easy to show that the chain maps $\Phi_*:Q_*\rightarrow \Bar_*(R)$ and $\Psi_*:\Bar_*(R)\rightarrow Q_*$ constructed using the maps $d_t^Q$ and $D_t$ ($t\ge 0$) by the algorithm from \cite{IIVZ} satisfy the equalities $\theta_{\Bar,t}\Phi_t=(\Phi_t)_{(\nu,\nu)}\theta_t$ and $\theta_t\Psi_t=(\Psi_t)_{(\nu,\nu)}\theta_{\Bar,t}$. Then it follows from \cite[Lemma 2]{Volk_new} that $t$-cocycle lies in $\Im\Theta_R^{\nu}$ if and only if it can be represented by a homomorphism $f\in\Hom(Q_t,R)$ such that
\begin{equation}\label{nu_stab}
f(e_{[i_1,j_1][i_2,j_2]})=\nu^{-1}f_{[i_1-1,j_1][i_2-1,j_2]}.
\end{equation}

\section{The description of the Lie bracket and the BV-structure}

In this section we suppose that $R=R(n,r)$.
Let $w$ be a path from a vertex $x$ to a vertex $y$. Denote by $w^*$ the element of
$\Hom(P_{[y][x]},R)$ such that $w^*(e_{y}\otimes e_{x})=w$. For $1\leqslant i\leqslant r$ consider the following auxiliary homomorphisms:
$$
\begin{aligned}
w_{i,m,j}&=(\omega_{i,j+m-1,j})^*\hspace{0.2cm}(0\leqslant m\leqslant n-3, 1\leqslant j\leqslant n-2-m);\\
t_{i,m,j}&=(\mu_{i+1,j+m-(n-1)}\tau_i\eta_{i,j})^*\\
&(1\leqslant m\leqslant n-2, n-1-m\leqslant j\leqslant n-2);\\
u_{i,m,q}&=(\gamma_{i,q}\eta_{i,n-2-m})^*\hspace{0.2cm}(0\leqslant m\leqslant n-3, q\in\{n-1,n\});\\
v_{i,m,q}&=(\mu_{i+1,m}\beta_{i,q})^*\hspace{0.2cm}(0\leqslant m\leqslant n-3, q\in\{n-1,n\});\\
u_{i,q}&={e_{i,q}}^*\hspace{0.2cm}(q\in\{n-1,n\});\\
v_{i,q}&=(\gamma_{i+1,q}\eta_{i+1,1}\beta_{i,q})^*\hspace{0.2cm}(q\in\{n-1,n\}).
\end{aligned}
$$

Let now define some elements of the algebra $\HH^*(R)$.

a) Define 1-cocycle $\ee_1\in\Hom(Q_1,R)$ by the formula
$$\ee_1=u_{1,0,n-1}+u_{1,0,n}.$$

b) Let $s=2m+l(2n-3)$, $0\leqslant m\leqslant n-2$, $r|m+l(n-1)$,
$2|m+ln$ and one of the following conditions is satisfied: $\charr k=2$ or
$2|l$. Define $s$-cocycle $f_s\in\Hom(Q_s,R)$ by the formula
$$
f_s=\sum\limits_{i=1}^r\Big(\sum\limits_{j=1}^{n-2-m}w_{i,m,j}+u_{i,n-1}+ u_{i,n}\Big).
$$

c) Let $s=2m+1+l(2n-3)$, $0\leqslant m\leqslant n-3$, $r|m+l(n-1)$, $2{\not|}m+ln$ and one of the following conditions is satisfied: $\charr k=2$
or $2{\not|}l$. Define $s$-cocycle $g_s\in\Hom(Q_s,R)$ by the formula
$$
g_s=\sum\limits_{i=1}^r \Big(\sum\limits_{j=n-1-m}^{n-2}t_{i,m,j}+
u_{i,m,n-1}+ v_{i,m,n-1}\Big).
$$

d) Let $s=(l+1)(2n-3)-1$, $r|(l+1)(n-1)-1$, $2{\not|}(l+1)n$. Define $s$-cocycle $h_s\in\Hom(Q_s,R)$ by the formula
$$
h_s=\sum\limits_{i=1}^r\sum\limits_{j=1}^{n-2}(-1)^jw_{i,0,j}.
$$

e) Let $s=(l+1)(2n-3)-1$, $r|(l+1)(n-1)-1$, $2|n$ and one of the following conditions is satisfied: $\charr k=2$ or
$2|l$. Define $s$-cocycle $p_s\in\Hom(Q_s,R)$ by the formula
$$
p_s=\sum\limits_{i=1}^r\Big(\sum\limits_{j=1}^{n-2}
(-1)^jw_{i,0,j}+u_{i,n-1}\Big).
$$

f) Let $s=l(2n-3)$, $l\geqslant 1$, $r|l(n-1)-1$, $2|ln$ and either $\charr k=2$ or $2{\not|}l$.
Denote by $\chi_s\in\Hom(Q_s,R)$ the $s$-cocycle, which is equal to $v_{r,n}$ on $P_{[1,n][r,n]}$ and is equal to 0 on other direct summands of $Q_s$.

g) Let $s=l(2n-3)$, $r|l(n-1)-1$, $2{\not|}ln$. Denote by $\xi_s\in\Hom(Q_s,R)$ the $s$-cocycle, which is equal to $t_{r,n-2,1}$ on $P_{[1,1][r,1]}$ and is equal to 0 on other direct summands of $Q_s$.

h) Let $r=1$. For $1\leqslant j\leqslant n-2$ denote by $\ee_0^{(j)}$ the 0-cocycle, which is equal to $t_{1,n-2,j}$ on $P_{[1,j][1,j]}$ and is equal to 0 on other direct summands of $Q_0$. For $q\in\{n-1,n\}$ denote by $\ee_0^{(q)}$ the 0-cocycle, which is equal to $v_{1,q}$ on $P_{[1,q][1,q]}$ and is equal to 0 on other direct summands of $Q_0$.

It was shown in \cite{Volk} that the elements defined in a)--h) are cocycles for the corresponding values of $s$ and that they generate $\HH^*(R)$ as a $k$-algebra. In addition, $\xi_s$ can be excluded from the set of generators in the case where $\charr k{\not|}\frac{n-1}{2}$ and $\chi_s$ can be excluded from the set of generators in the case where $2|n$, $\charr k{\not|}n-1$. Moreover, it is proved in the same work that the elements of the form $f_s$, $g_s$, $h_s$, $p_s$, $\ee_1f_s$, $\ee_1g_s$, $\chi_s$, $\xi_s$ and $\ee_0^{(q)}$ generate $\HH^*(R)$ as a $k$-linear space.

If $\charr k{\not|}r$, then $\HH^*(R)$ is a BV-algebra by the Corollary of Theorem \ref{Tradler_new}. Since $f_s$, $g_s$, $h_s$ and $p_s$ satisfy the condition \eqref{nu_stab}, they lie in the image of $\Theta_R^{\nu}$ (even if $\charr k|r$).

Let us introduce the following notation
$$
F(x)=\begin{cases}
0,&\mbox{ if $x=\ee_1$,}\\
m+l(n-1),&\mbox{ if $x=f_{2m+l(2n-3)}$ or $x=g_{2m+1+l(2n-3)}$,}\\
l(n-1)-1,&\mbox{ if $x=p_{l(2n-3)-1}$, $x=h_{l(2n-3)-1},$}\\
&\mbox{ $x=\xi_{l(2n-3)}$ or $x=\chi_{l(2n-3)}$.}\\
\end{cases}
$$
Note that $r|F(x)$ in all the cases.

\begin{prop}\label{ebrackets} Let $x\in\{\ee_1,f_s,g_s,h_s,p_s\}$. Then
\begin{equation}\label{febracket}
{}[x,\ee_1]=\frac{F(x)}{r}\,x
\end{equation}
in $\HH^*(R)$.
\end{prop}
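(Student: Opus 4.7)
The plan is to compute $[x,\ee_1]$ at the cochain level via the algorithm of \cite{IIVZ}, using the comparison chain maps $\Phi_*,\Psi_*$ between $Q_*$ and $\Bar_*(R)$ built from the contracting homotopies $D_t$ constructed earlier. The case $x=\ee_1$ is automatic: since $|\ee_1|=1$, the Gerstenhaber formula gives
$[\ee_1,\ee_1]=\ee_1\circ\ee_1-(-1)^{0\cdot 0}\ee_1\circ\ee_1=0$,
matching $F(\ee_1)/r=0$.

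For $x\in\{f_s,g_s,h_s,p_s\}$, I would first lift $\ee_1$ to the bar $1$-cochain $\widetilde{\ee}_1:=\ee_1\circ\Psi_1\in\Homk(R,R)$. Being a $1$-cocycle, $\widetilde{\ee}_1$ is a derivation of $R$; tracing through the definition of $\Psi_1$ (via $D_{-1}$ and $D_0$) shows that, modulo a coboundary, $\widetilde{\ee}_1$ is the derivation of $k\mathcal Q$ characterized by $\widetilde{\ee}_1(\gamma_{1,n-1})=\gamma_{1,n-1}$, $\widetilde{\ee}_1(\gamma_{1,n})=\gamma_{1,n}$, and $\widetilde{\ee}_1=0$ on every other arrow and every idempotent. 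A short direct check confirms that this assignment annihilates the ideal $I$. Consequently, for any basis path $w\in B_R$ one has $\widetilde{\ee}_1(w)=c_w w$, where $c_w\in\{0,1,2\}$ is the total multiplicity of $\gamma_{1,n-1}$ and $\gamma_{1,n}$ in $w$.

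Similarly lift $x$ to $\widetilde{x}:=x\circ\Psi_s$. Since $|\ee_1|=1$, all signs in the Gerstenhaber bracket collapse and
\[
[\widetilde{x},\widetilde{\ee}_1](a_1\otimes\cdots\otimes a_s)
= \sum_{i=1}^{s}\widetilde{x}(a_1\otimes\cdots\otimes\widetilde{\ee}_1(a_i)\otimes\cdots\otimes a_s)-\widetilde{\ee}_1\bigl(\widetilde{x}(a_1\otimes\cdots\otimes a_s)\bigr).
\]
Because $\widetilde{\ee}_1$ scales each basis path by its $\gamma_{1,\bullet}$-multiplicity, the right hand side equals (net $\gamma_{1,\bullet}$-count of the input tensor minus that of the output path) times $\widetilde{x}(a_1\otimes\cdots\otimes a_s)$. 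Pulling back by $\Phi_s$ and evaluating on each generator $e_{[i_1,j_1][i_2,j_2]}$ of the summands of $Q_s$ in the description of $f_s,g_s,h_s,p_s$, the task reduces to checking that this net integer is $F(x)/r$ on every summand. The factor $1/r$ reflects that $\widetilde{\ee}_1$ is supported at the single vertex $i=1$ while $x$ is a cyclic sum over all $r$ vertices, so of the $F(x)$ total $\gamma$-arrows traversed by $\widetilde{x}$ exactly $F(x)/r$ lie above vertex $1$ (this is consistent with the divisibility $r\mid F(x)$ noted after the definition of $F$).

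The main obstacle is the case-by-case bookkeeping in the last step: $\Psi_s$ is an iterated composition of the $D_t$'s with bar differentials, so $\widetilde{x}$ must be unfolded separately on each of the several kinds of summands of $Q_s$ (pure $\alpha$-paths, paths through $\tau$'s, idempotents at the exceptional vertices $(\cdot,n-1)$ and $(\cdot,n)$, and the gluing summands arising from the periodicity $Q_{t+l(2n-3)}={}_{\si^l}(Q_t)_1$). Each individual arrow count is elementary, but the combinatorial volume is large, and the uniformity of the answer $F(x)/r$ across all summands and all four families of cocycles is precisely the substantive content the proof must verify.
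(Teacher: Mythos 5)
Your proposal takes essentially the same route as the paper: there too one works through the comparison maps $\Phi_*,\Psi_*$ built from the $D_t$'s as in \cite{IIVZ}, identifies $\ee_1\Psi_1$ with the Euler operator of the grading in which only $\gamma_{1,n-1}$ and $\gamma_{1,n}$ have degree $1$ (so $\ee_1\Psi_1(b)=\deg(b)\,b$ for $b\in B_R$), and then reads off $[x,\ee_1]$ as a weight-defect count on the summands of $Q_s$, the non-leading terms of $\Phi_s$ being killed by $\Psi_s$. The case-by-case count you defer is exactly what the paper settles by writing explicit (ceiling-function) degree formulas for the generators of $Q_t$ and the value $\ee_1\Psi_1 x$, which give the uniform answer $F(x)/r$ on every summand in the support of $x$.
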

\begin{proof} Recall the construction of the chain maps $\Phi_t:Q_t\rightarrow\Bar_t(R)$ and $\Psi_t:\Bar_t(R)\rightarrow Q_t$ from \cite{IIVZ}.

Firstly, define $\Phi_0$ and $\Psi_0$ by the equalities $\Phi_0(e_x\otimes e_x)=e_x\otimes e_x$, $\Psi_0(e_x\otimes e_x)=e_x\otimes e_x$, $\Psi_0(e_x\otimes e_y)=0$ for $x,y\in\mathcal Q_0$, $x\not=y$. For $t>0$ the map $\Psi_t$ is defined by the formula
\begin{equation}\label{Psi}
\Psi_t(1\otimes a_1\otimes\dots\otimes a_n\otimes 1)=D_{t-1}(\Psi_{t-1}(1\otimes a_1\otimes\dots\otimes a_{n-1}\otimes 1)a_n).
\end{equation}
From now on we assume that $Q_t=\bigoplus\limits_{p\in X_t} P_{[x_p][y_p]}$ for all $t\ge 0$, where $\{X_t\}_{t\ge 0}$ is a set of disjoint sets. Let $\pi_p:Q_t\rightarrow P_{[x_p][y_p]}$ be the canonical projection and $$\pi_{p'}d_{t-1}^Q(e_{x_p}\otimes e_{y_p})=a_{p,p'}\otimes e_{y_{p'}}+b_{p,p'}\otimes c_{p,p'}$$ for $p'\in X_{t-1}$, $p\in X_t$, where $a_{p,p'}, b_{p,p'}\in R$ and $c_{p,p'}$ are in Jacobson radical of $R$. Then
\begin{equation}\label{Phi}
\Phi_t(e_{x_p}\otimes e_{y_p})=\sum\limits_{p'\in X_{t-1}}b_{p,p'}\Phi_{t-1}(e_{x_{p'}}\otimes e_{y_{p'}}) c_{p,p'}\otimes 1.
\end{equation}
Consequently,
$$\Psi_t\Phi_t(e_{x_p}\otimes e_{y_p})=\sum\limits_{p'\in X_{t-1}}b_{p,p'}D_{t-1}(\Psi_{t-1}\Phi_{t-1}(e_{x_{p'}}\otimes e_{y_{p'}}) c_{p,p'}).$$
It follows from these formulas and induction on $t$ that for all $t\ge 0$, $p\in X_t$ we have
\begin{equation}\label{goodPhitform}
\Phi_t(e_{x_p}\otimes e_{y_p})=e_{x_p}\otimes a_{p,1}\otimes\dots\otimes a_{p,t}\otimes e_{y_p}+\sum\limits_{z\in Y_p}a_{z,0}\otimes\dots\otimes a_{z,t+1},
\end{equation}
where
$$\Psi_t(e_{x_p}\otimes a_{p,1}\otimes\dots\otimes a_{p,t}\otimes e_{y_p})=e_{x_p}\otimes e_{y_p}$$
and $\Psi_t(a_{z,0}\otimes\dots\otimes a_{z,t+1})=0$ for all $z\in Y_p$.

The equality $[\ee_1,\ee_1]=0$ follows from the definition of the Lie bracket and the fact that $\ee_1$ is an element of odd degree. For other elements we use the formula
$$
[f,\ee_1]=((f\Psi_t)\circ(\ee_1\Psi_1)-(\ee_1\Psi_1)\circ(f\Psi_t))\Phi_t.
$$

Consider a $\mathbb{Z}$-grading on the algebra $R$ such that the idempotents and arrows, except $\gamma_{1,n-1}$ and $\gamma_{1,n}$, are of degree 0 and the arrows $\gamma_{1,n-1}$ and $\gamma_{1,n}$ are of degree 1. This grading induces a grading on $\Lambda$.
We can define a grading on the direct summands of $Q_t$ ($t\ge 0$) in such a way that $Q_*$ is a graded resolution of the module $R$. Let $t=t'+l(2n-3)$ ($0\le t'\le 2n-4$), $p\in X_t$ and $P_{[x_p][y_p]}\simeq{}_{\si^l}(P)_1$, where the module $P$ appears in the formula for $Q_{t'}$ as the module $P_{[i_2,j_2][i_1,j_1]}$. We define the degree of the element $e_{x_p}\otimes e_{y_p}\in Q_t$ in the following way:\\
1) if $j_1,j_2\not\in\{n-1,n\}$, then $$\deg (e_{x_p}\otimes e_{y_p})=\left\lceil\frac{i_2+l(n-1)-1}{r}\right\rceil-\left\lceil\frac{i_1-1}{r}\right\rceil;$$
2) if $j_1\not\in\{n-1,n\}$, $j_2\in\{n-1,n\}$, then $$\deg (e_{x_p}\otimes e_{y_p})=\left\lceil\frac{i_2+l(n-1)}{r}\right\rceil-\left\lceil\frac{i_1-1}{r}\right\rceil;$$
3) if $j_1\in\{n-1,n\}$, $j_2{\not\in}\{n-1,n\}$, then $$\deg (e_{x_p}\otimes e_{y_p})=\left\lceil\frac{i_2+l(n-1)-1}{r}\right\rceil-\left\lceil\frac{i_1}{r}\right\rceil;$$
4) if $j_1,j_2\in\{n-1,n\}$, then $$\deg (e_{x_p}\otimes e_{y_p})=\left\lceil\frac{i_2+l(n-1)}{r}\right\rceil-\left\lceil\frac{i_1}{r}\right\rceil.$$
Here we denote by $\lceil a\rceil$ the smallest integer which is greater of equal to $a$. It is easy to show that the differentials $d_t^Q$ are actually of degree 0 for the grading introduced in 1)--4). Moreover, it is easy to check that, if we introduce the grading on the modules $\Bar_t(R)$ in such a way that $\deg(a_0\otimes\dots\otimes a_{t+1})=\sum\limits_{i=0}^{t+1}\deg(a_i)$ for homogeneous elements $a_i\in R$ ($0\le i\le t+1$), then $\Bar_*(R)$ becomes a graded resolution of $R$. In addition $\Phi_t$ ($t\ge 0$) is a homomorphism of graded modules. It is easy to check that $\ee_1\Psi_1(b)=\deg(b)b$ for $b\in B_R$.

It is clear that for elements $f\in C^1(R)=\Homk(R,R)$ and $g\in C^t(R)=\Homk(R^{\otimes t},R)$ the composition product $f\circ g$ is just a composition of $f$ and $g$. Then $\big((\ee_1\Psi_1)\circ(f\Psi_t)\big)\Phi_t=\ee_1\Psi_1f$ for $f\in\Hom(Q_t,R)$, $fd_t^Q=0$. Then it is easy to show that
\begin{equation}\label{ecircf}
\begin{aligned}
&\big((\ee_1\Psi_1)\circ(f\Psi_t)\big)\Phi_t\\
=&\begin{cases}
0,&\mbox{if $f\in\{f_s,h_s,p_s\}$},\\
\sum\limits_{j=n-1-m}^{n-2}t_{1,m,j}+u_{1,m,n-1},&\mbox{if $f=g_s$}.
\end{cases}
\end{aligned}
\end{equation}

It follows from the formula \eqref{goodPhitform} and the formula $(\ee_1\Psi_1)(b)=\deg(b)b$ ($b\in B_R$) that
$$
\big((f\circ\Psi_t)\circ(\ee_1\Psi_1)\big)\Phi_t(e_{x_p}\otimes e_{y_p})=\deg (e_{x_p}\otimes e_{y_p})f(e_{x_p}\otimes e_{y_p})
$$
for $p\in X_t$, $f\in\Hom(Q_t,R)$, $fd_t^Q=0$. The assertion of proposition follows from this formula and \eqref{ecircf}.
\end{proof}

Now we prove a theorem which combined with Proposition \ref{ebrackets} and the results of \cite{Volk} gives a full description of the algebra $\HH^*(R)$ as a Gerstenhaber algebra in all cases and as a BV-algebra in the case $\charr k{\not|}r$.

\begin{theorem}\label{GerBV}
{\rm 1)} If $\charr k{\not|}r$, then $\HH^*(R)$ is a BV-algebra. In this case the BV-differential $\Delta$ is defined by the following equalities:
$$
\begin{aligned}
\Delta(\ee_1)&=\frac{1}{r},\:\:\Delta(f_s)=\Delta(g_s)=\Delta(h_s)=\Delta(p_s)=0,\\
\Delta(\ee_1f_s)&=\frac{f_s}{r}+[f_s,\ee_1],\:\:\Delta(\ee_1g_s)=\frac{g_s}{r}+[g_s,\ee_1],\\
\Delta(\chi_s)&=\frac{l}{r}\left(\frac{n}{2}f_{s-1}-p_{s-1}\right)\hspace{0.2cm}(2|n, s=l(2n-3)),\\
\Delta(\chi_s)&=0\hspace{0.2cm}(2{\not|}n),\:\:
\Delta(\xi_s)=\frac{2l}{r}h_{s-1}\hspace{0.2cm}(s=l(2n-3)),\:\:
\Delta(\ee^{(q)}_0)=0.
\end{aligned}
$$

{\rm 2)} Suppose that $\charr k|r$. Then $\chi_s$ and $\xi_s$ can be excluded from the set of generators and the Lie bracket is defined on the generators of $\HH^*(R)$ by the equalities \eqref{febracket} and the equalities
$$
\begin{aligned}
{}[f_{s_1},f_{s_2}]=&[f_{s_1},g_{s_2}]=[f_{s_1},h_{s_2}]=[f_{s_1},p_{s_2}]=
[g_{s_1},g_{s_2}]=[g_{s_1},h_{s_2}]\\
=&[g_{s_1},p_{s_2}]=[h_{s_1},h_{s_2}]=[p_{s_1},p_{s_2}]=0.
\end{aligned}
$$
\end{theorem}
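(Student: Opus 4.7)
The plan is to handle the two parts separately, using the map $\Theta_R^{\nu}\colon \HH^*(R)^{\nu\uparrow} \to \HH^*(R)$ as a bridge. Throughout I identify a cocycle $f\in\Hom(Q_t,R)$ with the cochain $\tilde f\in C^t(R)=\Homk(R^{\otimes t},R)$ given by $\tilde f(a_1\otimes\dots\otimes a_t) = f\Psi_t(1\otimes a_1\otimes\dots\otimes a_t\otimes 1)$, apply \eqref{Delta} using the explicit pairing $\la w_1,w_2\ra$ and basis $B_R$, and read off a cocycle on $Q_{t-1}$ via $\Phi_{t-1}$. The shortcut $\ee_1\Psi_1(b)=\deg(b)\,b$ for $b\in B_R$ (from the proof of Proposition \ref{ebrackets}), where $\deg$ counts occurrences of $\gamma_{1,n-1}$ and $\gamma_{1,n}$, is what will ultimately deliver the factors of $1/r$.

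For Part 1, with $\charr k\nmid r$, the Corollary of Lemma \ref{chainty} gives $\Theta_R^{\nu}\colon\HH^*(R)^{\nu\uparrow}\xrightarrow{\sim}\HH^*(R)$, so the BV-differential of Theorem \ref{Tradler_new} descends. For $\ee_1$, first replace it in its cohomology class by the $\nu$-average $\frac1r\sum_{i=1}^r(u_{i,0,n-1}+u_{i,0,n})$; applying \eqref{Delta} to this $\nu$-invariant representative yields $\Delta(\ee_1)=1/r$. For $f_s,g_s,h_s,p_s$, use the homotopy $s_n^{i,\nu}$ from the proof of Theorem \ref{Tradler_new} to choose a representative vanishing on tensors with any entry equal to $1$; each term $\Delta_i$ in \eqref{Delta} then produces a coboundary, so $\Delta=0$. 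For $\ee_1 f_s$ and $\ee_1 g_s$, apply the BV identity \eqref{GerBVeq} with $a=\ee_1$ and substitute the values of $\Delta(\ee_1)$, $\Delta(f_s)$ (resp.\ $\Delta(g_s)$), using graded antisymmetry of the bracket and Proposition \ref{ebrackets}; the stated formulas fall out. For $\chi_s$ and $\xi_s$, which are supported on a single projective summand, direct computation traces \eqref{Delta} through $\Psi_s$ and identifies the output in the known basis for $\HH^{s-1}(R)$ from \cite{Volk}; the parity split for $\chi_s$ reflects whether $p_{s-1}$ is present as a generator. Finally $\Delta(\ee_0^{(q)})=0$ since $\Delta$ decreases degree and $\ee_0^{(q)}$ lives in degree $0$.

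For Part 2, with $\charr k\mid r$, the generators $f_s,g_s,h_s,p_s$ satisfy \eqref{nu_stab} and thus lift to $\HH^*(R)^{\nu\uparrow}$, on which Theorem \ref{Tradler_new} still provides a BV-structure. For any two such generators $F,G$, the BV identity in $\HH^*(R)^{\nu\uparrow}$ reads
$$[F,G]=-(-1)^{(|F|-1)|G|}\bigl(\Delta(F\smile G)-\Delta(F)\smile G-(-1)^{|F|}F\smile\Delta(G)\bigr).$$
The vanishings $\Delta(F)=\Delta(G)=0$ obtained in Part 1 did not rely on division by $r$, so they persist in $\HH^*(R)^{\nu\uparrow}$; the bracket thus reduces to $\Delta(F\smile G)$. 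Using the multiplicative structure from \cite{Volk}, each $F\smile G$ decomposes in $\HH^*(R)^{\nu\uparrow}$ as a linear combination of generators of the form $f_{s'},g_{s'},h_{s'},p_{s'}$, and $\Delta$ vanishes on each summand, so $\Delta(F\smile G)=0$ and $[F,G]=0$ in $\HH^*(R)^{\nu\uparrow}$; pushing via $\Theta_R^{\nu}$ completes the vanishing in $\HH^*(R)$. The excludability of $\chi_s,\xi_s$ under $\charr k\mid r$ then follows from the product relations in \cite{Volk}, whose scalar factors become compatible precisely when $r=0$ in $k$.

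The main obstacle is the direct evaluation of $\Delta(\chi_s)$ and $\Delta(\xi_s)$: although each is supported on a single projective summand, applying the cyclic sum \eqref{Delta} forces a lengthy trace through many arrow paths in the resolution, and one must then recognize the resulting cocycle in the explicit basis of $\HH^{s-1}(R)$ from \cite{Volk}. A secondary subtlety in Part 2 is ensuring that every product $F\smile G$ is expressed in $\HH^*(R)^{\nu\uparrow}$ using only generators on which $\Delta$ already vanishes, which requires rereading the product tables of \cite{Volk} with $\nu$-invariance in mind.
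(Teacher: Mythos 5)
Your overall scaffolding (average $\ee_1$ over $\nu$ to get $\Delta(\ee_1)=1/r$, use the BV identity \eqref{GerBVeq} for $\Delta(\ee_1f_s)$, $\Delta(\ee_1g_s)$, degree reasons for $\Delta(\ee_0^{(q)})=0$) matches the paper, but two central steps are genuinely broken or missing. First, the claim that $\Delta(f_s)=\Delta(g_s)=\Delta(h_s)=\Delta(p_s)=0$ because a representative normalized via $s_n^{i,\nu}$ (vanishing when some $a_i=1$) makes "each term $\Delta_i$ a coboundary" is false: normalization is used in Theorem \ref{Tradler_new} only to get $\Delta\circ\Delta=0$, and the formula for $\Delta_i$ never inserts a unit into the arguments of $f$, so it gives no vanishing. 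Your own computation is a counterexample: the representative $\tilde\ee_1\Psi_1$ is normalized and $\nu$-stable, yet $\Delta(\ee_1)=\frac1r\neq 0$. The paper's actual mechanism is an internal grading by path length: $\Delta$ preserves this grading, $\deg x=(n-1)F(x)$ for these generators, and $\HH^{s-1}(R)$ contains no nonzero class of that internal degree, hence $\Delta x=0$. Second, for $\Delta(\chi_s)$ and $\Delta(\xi_s)$ you offer only an unexecuted "direct computation" and yourself flag it as the main obstacle; the paper never performs it. Instead it obtains the formulas from \cite[Lemma 1]{Volk} in the cases where $\chi_s$, $\xi_s$ are expressible through the other generators ($2|n$, $\charr k\nmid n-1$, resp. $\charr k\nmid\frac{n-1}{2}$), and then extends to the remaining characteristics prime to $r$ by an integrality argument: in the standard bases the matrices of $\Phi_*$, $\Psi_*$, $D_*$, the pairing and hence of $\Delta(\tilde\chi_s\Psi_s)\Phi_{s-1}$, $\Delta(\tilde\xi_s\Psi_s)\Phi_{s-1}$ are integers independent of the field, so the characteristic-zero identities persist.

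Part 2 also does not go through as written. When $\charr k\mid r$ the map $\Theta_R^{\nu}$ is not an isomorphism, so you may not import into $\HH^*(R)^{\nu\uparrow}$ either the vanishing of $\Delta$ on lifts of $f_s,g_s,h_s,p_s$ (your Part 1 argument, besides being flawed, lives in $\HH^*(R)$) or the decomposition of cup products of lifts into linear combinations of lifted generators: the ring $\HH^*(R)^{\nu\uparrow}$ is not computed, and the product tables of \cite{Volk} describe $\HH^*(R)$, not it. The paper's Part 2 avoids $\Delta$ altogether: when $\charr k\mid r$ one checks that all of $f_s,g_s,h_s,p_s$ have even cohomological degree, and that $\Theta_R^{\nu}$ annihilates every odd-degree class, because any $\nu$-stable odd cocycle can be written (via the proof of \cite[Lemma 3]{VolkGen} and \eqref{nu_stab}) as $\sum_{i=0}^{r-1}\nu^{-i}\bar a\theta^i$, which equals $r\bar a=0$ in $\HH^{2s+1}(R)$ by Corollary 2 of Lemma \ref{chainty}; since $[x,y]$ has odd degree, $[f,g]=\Theta_R^{\nu}([x,y])=0$. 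Finally, the exclusion of $\chi_s,\xi_s$ is not a matter of scalar factors "becoming compatible when $r=0$ in $k$": these generators occur only when $n-1$ and $r$ are coprime, so $\charr k\mid r$ forces $\charr k\nmid n-1$, and the exclusion criteria already proved in \cite{Volk} apply.
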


\begin{proof} 1) In this case $\HH^*(R)$ is a BV-algebra. Let us consider $\tilde\ee_1\in\Hom(Q_1,R)$ defined by the equality $$\tilde\ee_1=\frac{\sum\limits_{i=1}^r(u_{i,0,n-1}+u_{i,0,n})}{r}=\frac{\sum\limits_{i=1}^r\nu^{-i}\ee_1\theta^i}{r}.$$
Then $\tilde\ee_1=\ee_1$ in $\HH^*(R)$ and it is easy to show that $\Delta(\tilde\ee_1\Psi_1)=\frac{1}{r}$.

Let us consider a $\mathbb Z$-grading on the algebra $R$, which is induced by length. This grading induces a grading on $\Lambda$. We can define a grading on the direct summands of $Q_t$ ($t\ge 0$) in such a way that $Q_*$ is a graded resolution of the module $R$. Let $t=t'+l(2n-3)$ ($0\le t'\le 2n-4$), $p\in X_t$ and $P_{[x_p][y_p]}\simeq{}_{\si^l}(P)_1$, where the module $P$ appears in the formula for $Q_{t'}$ as the module $P_{[i_2,j_2][i_1,j_1]}$. Then we define the degree of the element $e_{x_p}\otimes e_{y_p}\in Q_t$ by the formula
$$\deg (e_{x_p}\otimes e_{y_p})=l(n-1)^2+a(n-1)+{\rm min}(j_2,n-1)-{\rm min}(j_1,n-1).$$
It is easy to show that the differentials $d_t^Q$ are actually of degree 0 for this grading. Moreover, it is easy to check that, if we introduce a grading on the modules $\Bar_t(R)$ in such a way that $\deg(a_0\otimes\dots\otimes a_{t+1})=\sum\limits_{i=0}^{t+1}\deg(a_i)$ for homogeneous elements $a_i\in R$ ($0\le i\le t+1$), then $\Bar_*(R)$ becomes a graded resolution of $R$. In addition $\Phi_t$ and $\Psi_t$ ($t\ge 0$) are homomorphisms of graded modules.

Let $M$, $N$ be $\mathbb Z$-graded spaces. We say that a linear map $\ph:M\rightarrow N$ is of degree $q$ and write $\deg\ph=q$ if $\deg\ph(m)=\deg m-q$ for any homogeneous element $m\in M$. Thus it is easy to show that a grading on $R$ induces a grading on $\HH^*(R)$. Then direct inspection shows that
$\deg x=(n-1)F(x)$ for $x\in\{\ee_1,f_s,g_s,h_s,p_s,\xi_s,\chi_s\}$.
Note that any $b\in B_R$ is homogeneous and satisfies the equalities $\deg\bar b=n-1-\deg b$ and $\deg \nu b=\deg b$. In addition we have $\langle a,1\rangle=0$ for homogeneous $a\in R$ such that $\deg a\not=n-1$. Suppose that $x\in \Ker\delta_s^{\nu}$ and $\deg x=q$. Let $a_i\in B_R$ ($1\le i\le n-1$), $A=a_1\otimes\dots\otimes a_{n-1}$. Then
$$
\Delta_ix(A)=\sum\limits_{a\in B_R}\la f(a_i\otimes\dots\otimes a_{n-1}\otimes \bar a\otimes \nu a_1\otimes\dots\otimes \nu a_{i-1}),1\ra a.
$$
Since
$$\deg f(a_i\otimes\dots\otimes a_{n-1}\otimes \bar a\otimes \nu a_1\otimes\dots\otimes \nu a_{i-1})
=\deg A+\deg \bar a-q,$$
nonzero coefficients can appear only for $a\in B_R$ such that $\deg a=\deg A-q$, i.e. $\deg (\Delta_ix)=q$. So $\deg (\Delta x)=\deg x$.

If $x\in\{f_s,g_s,h_s,p_s\}$, then $\deg x=(n-1)F(x)$ and there are no nonzero elements of such degree in $\HH^t(R)$ for $t<s$. So $\Delta(f_s)=\Delta(g_s)=\Delta(h_s)=\Delta(p_s)=0$. Then the equalities for $\Delta(\ee_1f_s)$ and $\Delta(\ee_1g_s)$ follow from \eqref{GerBVeq}. We can calculate $\Delta(\ee_1 p_s)$ and $\Delta(\ee_1 h_s)$ using the same formula. The formula for $\Delta(\chi_s)$ in the case where $2|n$, $\charr k{\not|}n-1$ and the formula for $\Delta(\xi_s)$ in the case where $2{\not|}n$, $\charr k{\not|}\frac{n-1}{2}$ follow from \cite[Lemma 1]{Volk}.

Let now $\tilde\xi_s=\sum\limits_{i=1}^r\nu^{-i}\xi_s\theta^i$, $\tilde\chi_s=\sum\limits_{i=1}^r\nu^{-i}\chi_s\theta^i$. Then $\xi_s=\frac{\tilde\xi_s}{r}$ and $\chi_s=\frac{\tilde\chi_s}{r}$ in $\HH^*(R)$. Note that the elements $\tilde\xi_s\Psi_s$ and $\tilde\chi_s\Psi_s$ belong to $C^s(R)^{\nu}$. In addition, if the elements $\tilde\xi_s\Psi_s$ and $\tilde\chi_s\Psi_s$ are defined for some field $k$, then they are defined for any field (for the same quiver).
As it was said before the formulas for $\Delta(\xi_s)=\frac{\Delta(\tilde\xi_s\Psi_s)\Phi_{s-1}}{r}$ and $\Delta(\chi_s)=\frac{\Delta(\tilde\chi_s\Psi_s)\Phi_{s-1}}{r}$ are valid for a field $k$ with zero characteristic.

Let us introduce the notion of the standard basis for some modules.
The standard basis for $\Lambda$ is the set $B_{\Lambda}=\{a\otimes b\}_{a,b\in B_R}$. If $x,y\in\mathcal Q_0$, then the standard basis for $P_{[x][y]}$ is the set $B_{\Lambda}\cap P_{[x][y]}$. And the standard basis for $R^{\otimes t}$ is the set $\{a_1\otimes\dots\otimes a_t\}_{a_1,\dots,a_t\in B_R}$. Thus we define the standard basis for $Q_t$ ($t\ge 0$) and $\Bar_t(R)$. Note that in all cases the definition of the standard basis does not depend on the field. If the standard basis is defined for a module $M$, we denote it by $B_M$. We denote by $\mathbb ZB_M$ the set of linear combinations with integer coefficients of elements from $B_M$. Note that\\
 a) if $b\in \mathbb ZB_M$ and $a\in B_R$, then $ab, ba\in \mathbb ZB_M$ and the coefficients do not depend on the field;\\
 b) if $a\in \mathbb ZB_{Q_t}$, then $d_{t-1}^Q(a)\in \mathbb ZB_{Q_{t-1}}$, $D_t(a)\in \mathbb ZB_{Q_{t+1}}$ and the coefficients do not depend on the field;\\
 c) if $a\in B_R$, then $\bar a\in B_R$ and $\nu a\in B_R$ and they do not depend on the field;\\
 d) if $a,b\in \mathbb ZB_R$, then $\langle a,b\rangle\in\mathbb Z$ and it does not depend on the field.

 It follows from a) and b) that the elements of the matrices of $\Psi_t$ and $\Phi_t$ in the standard basis are integer and do not depend on the field.
 It follows from c) and d) that if the matrix of $x:R^{\otimes (n+2)}\rightarrow R$ written in the standard basis consists of integer numbers, then the matrix of $\Delta(x):R^{\otimes (n+1)}\rightarrow R$ written in the standard basis consists of integer numbers which do not depend on the field. It follows from our arguments that the matrices of $\Delta(\tilde\xi_s\Psi_s)\Phi_{s-1}:Q_{s-1}\rightarrow R$ and $\Delta(\tilde\chi_s\Psi_s)\Phi_{s-1}:Q_{s-1}\rightarrow R$ written in the standard bases consist of integer numbers which do not depend on the field. If $x\in\{\xi_s, \chi_s\}$, then it follows from \cite[Remark 5]{Volk} that if the set of elements of degree $(n-1)F(x)$ is linear independent in $\Ker(\Hom(d_{s-1}^Q,R))$, then it is linear independent in $\HH^{s-1}(R)$. The formulas for $\Delta(\tilde\xi_s\Psi_s)\Phi_{s-1}$ and $\Delta(\tilde\chi_s\Psi_s)\Phi_{s-1}$ are written in terms of elements of $\Ker(\Hom(d_{s-1}^Q,R))$ whose definitions do not depend on the field. So these formulas remain true for any field.

2) Since the elements $\chi_s$ and $\xi_s$ appear only in the case $\Nod(n-1,r)=1$, they can be excluded from the set of generators in this case. Let $f,g\in\{f_s,g_s,h_s,p_s\}$. Then there are such $x,y\in \HH^*(R)^{\nu\uparrow}$ that $\Theta_R^{\nu}(x)=f$ and $\Theta_R^{\nu}(y)=g$. Since $\Theta_R^{\nu}$ is a homomorphism of Gerstenhaber algebras we have $[f,g]=\Theta_R^{\nu}([x,y])$. Since $\charr k|r$ we have $\charr k\not=2$ or $2|r$.
It is easy to check that in both cases elements $f$ and $g$ have even degree. In addition it follows from the proof of \cite[Lemma 3]{VolkGen} and the formula \eqref{nu_stab} that if $\charr k\not=2$ or $2|r$, then for any $a\in\Ker(\Hom(d_{2s+1}^Q,R))$ such that $a=\nu^{-1}a\theta_{2s+1}$ there is $\bar a\in\Ker(\Hom(d_{2s+1}^Q,R))$ such that $a=\sum\limits_{i=0}^{r-1}\nu^{-i}\bar a\theta_{2s+1}^i$. Since $\bar a^{\nu}=\bar a$ in $\HH^*(R)$ by Corollary 2 of Lemma \ref{chainty} we have
$$
a=\sum\limits_{i=0}^{r-1}\nu^{-i}\bar a\theta_{2s+1}^i=\sum\limits_{i=0}^{r-1}\bar a^{\nu^i}=r\bar a=0
$$
in $\HH^{2s+1}(R)$. Consequently, $\Theta_R^{\nu}(\HH^s(R)^{\nu\uparrow})=0$ for odd $s$. The element $[x,y]$ has odd degree because elements $x$ and $y$ have even degree. Then $[f,g]=\Theta_R^{\nu}([x,y])=0$ and 2) is proved.
\end{proof}

\begin{rema} It is easy to show that we can introduce the BV-structure on $\HH^*(R)$ in the case where $\charr k|r$. For example, we can set $\Delta$ equal to 0 on all generators of $\HH^*(R)$.
\end{rema}

\end{document}